\newcommand{\kommentar}[1]{}
\newcommand{\C}{\mathbb{C}}
\newcommand{\F}{\mathbb{F}}
\newcommand{\R}{\mathbb{R}}
\newcommand{\Q}{\mathbb{Q}}
\newcommand{\Z}{\mathbb{Z}}
\newtheorem{thm}{Theorem}
\newtheorem{prop}[thm]{Proposition}
\newtheorem{lem}[thm]{Lemma}
 \definecolor{ffccww}{rgb}{1.,0.8,0.4}
\definecolor{qqzzcc}{rgb}{0.,0.6,0.8}
\definecolor{ccqqqq}{rgb}{0.8,0.,0.}
\definecolor{uuuuuu}{rgb}{0.26666666666666666,0.26666666666666666,0.26666666666666666}
\definecolor{ffdxqq}{rgb}{1.,0.8431372549019608,0.}
\renewcommand{\pmod}[1]{\,(\mathrm{mod}\,#1)}
\title{Spherical Heron triangles and elliptic curves}
\author{Tinghao Huang}
\address{Sun Yat-sen University (Zhuhai Campus), Tangjawan, Zhuhai, Guangdong  510275, China}
\email{huangth8@mail2.sysu.edu.cn}
\author{Matilde Lal\'in}
\address{Universit\'e de Montr\'eal,
  Pavillon Andr\'e-Aisenstadt,
  D\'epartement de math\'ematiques et de statistique,
  CP 6128,
  succ.\ Centre-ville, 
	Montr\'eal, Qu\'ebec, H3C~3J7, Canada}
  \email{mlalin@dms.umontreal.ca} 
\author{Olivier Mila}
\address{Centre de recherches math\'ematiques,
	Universit\'e de Montr\'eal,
  Pavillon Andr\'e-Aisen\-stadt,
  2920 Chemin de la tour,
	Montr\'eal, Qu\'ebec, H3T~1J4, Canada}
  \email{olivier.mila@umontreal.ca}
\thanks{This work is supported by the Swiss National Science Foundation, Project number \texttt{P2BEP2\_188144}, by the Natural Sciences and Engineering Research Council of Canada, Discovery Grant \texttt{355412-2013}, by the Fonds de recherche du Qu\'ebec - Nature et technologies, Projet de recherche en \'equipe \texttt{256442} and \texttt{300951}, and by Mitacs - Globalink Research Internship}
\subjclass[2010]{Primary 11G05; Secondary 14J27, 14J28, 14H52, 11D25}
\keywords{spherical triangles, elliptic curves, elliptic surfaces}
\begin{document}

\maketitle

\begin{abstract}
 We define spherical Heron triangles (spherical triangles with ``rational'' side-lengths and angles) and parametrize them via rational points of certain families of elliptic curves. We show that the congruent number problem has  infinitely many solutions for most areas in 
    the spherical setting and we find a spherical Heron triangle with rational medians. 
    We also explore the question of spherical triangles with a single rational median or a single
    a rational area bisector (median splitting the triangle in half), and discuss various problems involving isosceles spherical triangles.
\end{abstract}

\section{Introduction}

Problem D21 in Guy's book \cite{Guy} asks whether there are any triangles whose area is rational and  whose sides and medians  have rational lengths.  The question of rational medians was already considered by Euler \cite{Euler}, who parametrized triangles whose medians are rational, but without imposing the other conditions. To this day Guy's  problem D21 remains open. Buchholz and Rathbun \cite{Buchholz-Rathbun, Buchholz-Rathbun2} parametrized families with two rational medians using elliptic curves. Other authors have worked with the elliptic curves that appear from this problem \cite{Dujella-Peral,Dujella-Peral2, Ismail}. More generally, Heron triangles (triangles with rational side lengths and rational area) have been extensively studied by various authors \cite{Sastry, Kramer, Goins, Bremner-Heron, vanLuijk, Luca, Stanica, Beardon, Halbeisen}. More general cevians were studied in \cite{Buchholz-thesis,Laflamme-Lalin}.

In \cite{Hartshorne-vanLuijk} Hartshorne and van Luijk introduced the idea of studying rationality of lengths in hyperbolic triangles. The second and third named authors followed this idea and studied various problems related to finding rational cevians in hyperbolic triangles \cite{LalinMila}. It should be noted that a slightly different notion of rationality for hyperbolic triangles was considered by Brody and Schettler in \cite{Brody-Schettler}.

In this work we study some analogous problems for spherical triangles. To do this, we need to define the idea of rationality in this context. A spherical triangle is a triangle on the surface of the unit sphere whose sides are given by arcs in great circles, i.e., it is determined by the intersections of three planes passing through the center of the sphere with the surface of the sphere. We will focus on {\em proper} triangles, which satisfy that the sides $a, b, c$ and the angles $\alpha, \beta, \gamma$ are smaller than $\pi$. Thus, in a proper spherical triangle, we have 
\[\pi < \alpha + \beta+ \gamma <3\pi\]
and 
\[0<a +b+c<2\pi.\] 
The Gauss--Bonnet theorem implies that the area of such spherical triangle is given by 
\begin{equation}\label{eq:GB}
A=\alpha+\beta+\gamma -\pi.
\end{equation}
Following a convention analogous to what was adopted in \cite{Hartshorne-vanLuijk,LalinMila}, we will call an angle $\omega$, the area $A$, or a length $x$ rational  if and only if the sines and cosines of these quantities are rational, or equivalently, if  $e^{i\omega}, e^{iA},$ or  $e^{ix} \in \Q(i)$. In particular, notice that if $\alpha, \beta$, and $\gamma$ are rational, equation \eqref{eq:GB} implies that so is the area $A$. 

Recall that $e^{ix} \in \Q(i)$ if and only if 
$\cos(x) = \frac{1 - t^2}{1 + t^2}$ and $\sin(x) = \frac{2t}{1+t^2}$ for some $t \in \Q$. Indeed, we have 
\begin{equation}\label{eq:rational}
e^{ix}=\frac{i-t}{i+t}\in \Q(i) \Longleftrightarrow t=\frac{\sin(x)}{1+\cos(x)}\in \Q \Longleftrightarrow (\cos(x),\sin(x))=\left(\frac{1-t^2}{1+t^2},\frac{2t}{1+t^2}\right).
\end{equation}
By abuse of terminology we will call $t$ the \emph{rational side} (resp.\ \emph{rational angle}) of a 
spherical triangle if its side (resp.\ angle) is $x$.

In sum, a spherical triangle with area $A$, angles $\alpha, \beta, \gamma$ and sides $a, b, c$ is a {\it spherical Heron triangle} or {\it spherical rational triangle} if 
\[e^{ia}, e^{ib}, e^{ic}, e^{i\alpha}, e^{i\beta}, e^{i\gamma} \in \Q(i),\]
and this implies that $e^{iA} \in \Q(i)$ as well. 

\begin{figure}
    \centering
\newcommand{\InterSec}[3]{%
  \path[name intersections={of=#1 and #2, by=#3, sort by=#1,total=\t}]
  \pgfextra{\xdef\InterNb{\t}}; }
   
       \begin{tikzpicture}

       \pgfmathsetmacro\R{2} 
       \draw (0,0,0) circle (\R); 

        \foreach \angle[count=\n from 1] in {-10,225,110} {

          \begin{scope}[rotate=\angle]
              \path[draw,dashed,opacity=0.3,name path global=d\n] (2,0) arc [start angle=0,
              end angle=180,
              x radius=2cm,
            y radius=1cm] ;
            \path[draw,name path global=s\n] (-2,0) arc [start angle=180,
              end angle=360,
              x radius=2cm,
            y radius=1cm] ;
          \end{scope}
        }

        \InterSec{s1}{s2}{I3} ;
        \InterSec{s1}{s3}{I2} ;
        \InterSec{s3}{s2}{I1} ;
        \fill[fill=black,opacity=0.2] (I1) to [bend left=19]  (I2) to [bend left=21] 
        (I3) to [bend left=20] (I1);

        \InterSec{d1}{d2}{J3} ;
        \InterSec{d1}{d3}{J2} ;
        \InterSec{d3}{d2}{J1} ;
      \end{tikzpicture}

    \caption{A spherical triangle.}
    \label{fig:my_label2}
\end{figure}
One of the goals of this article is to compare the situation in the spherical and hyperbolic worlds. 
In this sense, some of our results will be analogous to the ones in \cite{LalinMila}.

First we treat the generation of spherical Heron triangles. If we fix two sides, we obtain the following result. 
\begin{thm} \label{th:sides}
For all but finitely many choices of rational sides with parameters $u$ and $v$ there are 
infinitely many  spherical triangles  such that the third side and the angles are rational.
\end{thm}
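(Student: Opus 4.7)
\emph{Proof plan.} The starting point is the spherical law of cosines
\[\cos c = \cos a\cos b + \sin a\sin b\cos\gamma.\]
Since the rational sides $a,b$ are fixed, the right-hand side is automatically rational once $\cos\gamma$ is, so $\cos c$ being rational is free. What we still have to arrange is the rationality of $\sin c$: once this is done, the remaining angles $\alpha,\beta$ are automatically rational through the spherical law of sines (which gives $\sin\alpha=\sin a\sin\gamma/\sin c$) and the analogous law of cosines (which expresses $\cos\alpha$ as a rational function of $\cos a,\cos b,\cos c,\sin b,\sin c$). Parametrizing the rational angle $\gamma$ by $t=\tan(\gamma/2)\in\Q$ as in \eqref{eq:rational}, we therefore need to produce infinitely many $t\in\Q$ for which $1-\cos^2 c$ is a rational square.

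Substituting $\cos\gamma=(1-t^2)/(1+t^2)$ into the law of cosines, clearing denominators, and applying the half-angle identities $1\pm\cos\theta=2\sin^2(\theta/2)$ or $2\cos^2(\theta/2)$, one finds that the condition $\sin c\in\Q$ is equivalent to the existence of $y\in\Q$ satisfying
\[y^2 = \bigl(\sin^2\tfrac{a-b}{2}+\sin^2\tfrac{a+b}{2}\,t^2\bigr)\bigl(\cos^2\tfrac{a-b}{2}+\cos^2\tfrac{a+b}{2}\,t^2\bigr),\]
whose four coefficients, being of the form $(1\pm\cos(a\pm b))/2$, are rational functions of $u,v$. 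This curve $E_{u,v}$ carries the visible rational point $(t,y)=(0,\tfrac12\sin(a-b))$, and is therefore an elliptic curve over $\Q(u,v)$. That distinguished point corresponds to the degenerate configuration $\gamma=0$, $c=|a-b|$, so to obtain proper spherical Heron triangles we need \emph{further} rational points on $E_{u,v}$.

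The plan is then to view $E_{u,v}$ as the generic fibre of an elliptic surface over $\Q(u,v)$, exhibit an explicit rational section $P$, and verify that $P$ has infinite order on the generic fibre. Silverman's specialization theorem then implies that for $(u_0,v_0)\in\Q^2$ outside a thin exceptional set, the specialized point has infinite order in $E_{u_0,v_0}(\Q)$, producing infinitely many rational values of $t$ and hence of $\gamma$, $c$, $\alpha$, $\beta$ for each such $(u_0,v_0)$.

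The main obstacle is clearly the construction and non-torsion verification of this auxiliary section. A natural strategy is to put $E_{u,v}$ in Weierstrass form and exploit the two $2$-isogenies provided by the factorization of the right-hand side into two rational quadratics: a descent via either isogeny gives a computable Selmer-style obstruction that can rule out torsion. Candidates for $P$ can be read off from geometrically meaningful configurations such as $\gamma=\pi/2$, the isosceles cases $\gamma=a$ or $\gamma=b$, or the second obvious rational point at $t=\infty$ with $y=\pm\tfrac12\sin(a+b)$ on the smooth projective model. Failing a uniform global descent, one may confirm non-torsion locally by specializing at a convenient $(u_0,v_0)$ and exhibiting a point of infinite order via reduction modulo a well-chosen prime, and then transferring this information back to the generic fibre.
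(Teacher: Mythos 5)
Your reduction is sound and is essentially a re-coordinatization of the paper's: you fix two sides and parametrize by the included angle $\gamma$ (via $t$), whereas the paper fixes two sides and parametrizes by the third side; since $\cos c=\cos a\cos b+\sin a\sin b\cos\gamma$ is a linear relation between $\cos c$ and $\cos\gamma$, your quartic $y^2=\bigl(\sin^2\tfrac{a-b}{2}+\sin^2\tfrac{a+b}{2}t^2\bigr)\bigl(\cos^2\tfrac{a-b}{2}+\cos^2\tfrac{a+b}{2}t^2\bigr)$ is birationally the same curve as the paper's Weierstrass model $y^2=x\,(x-(v+v^{-1})^2)(x-(w+w^{-1})^2)$. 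The overall strategy (elliptic fibration over the two fixed side-parameters, a non-torsion section, specialization) is exactly the paper's.

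The genuine gap is that the decisive step is only gestured at: you never actually exhibit a section of infinite order over $\Q(u,v)$, and without it the argument proves nothing. The visible points you list are not candidates: $t=0$ and $t=\infty$ correspond to the degenerate configurations $\gamma=0$ and $\gamma=\pi$, and together with the roots of the quartic they account precisely for the torsion subgroup, which the paper computes to be $\Z/4\Z\times\Z/2\Z$ (Lemma~\ref{lem:rank-computations-sides}); so every ``obvious'' point is torsion and yields no triangle. Your fallback --- ``specialize at a convenient $(u_0,v_0)$, exhibit a point of infinite order, and transfer this information back to the generic fibre'' --- is logically backwards: specialization lets you certify that a \emph{given} section is non-torsion by checking one fibre, but a point of infinite order on a single fibre does not lift to a section of the family. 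The paper closes this gap by transporting the explicit section and the full Mordell--Weil structure from the hyperbolic analogue via the isomorphism $(x,y)\mapsto(-x,iy)$, $(v,w)\mapsto(iv,iw)$, obtaining the concrete infinite-order point $R(v,w)=\bigl(vw(v+v^{-1})(w+w^{-1}),(v+v^{-1})(w+w^{-1})(v^2w^2-1)\bigr)$; any completion of your argument must produce an equivalent explicit section (and, to land on \emph{proper} triangles, you should also add the density step used in Theorem~\ref{th:thm4}, or note that in your parametrization every rational point with $t>0$ already yields a genuine side-angle-side configuration).
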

This result is completely analogous to \cite[Theorem 3]{LalinMila}. It is achieved by parametrizing such triangles with points in the elliptic curve 
\[y^2 = x  (x -  (v + v^{-1})^2)  (x - (w + w^{-1})^2)\]
and showing  that for most values of $v,w\in \Q$, this elliptic curve has positive rank. 

Another approach, which follows naturally from extending the congruent number problem and the techniques of \cite{Goins}, is to fix an angle and the area. While this was achieved for the hyperbolic case in \cite[Theorem 1, Corollary 2]{LalinMila}, we encounter a difficulty here, as we are not able to construct the corresponding elliptic curve over $\Q$. Instead, we consider some particular cases. For the spherical congruent number problem, we obtain 
\begin{thm}\label{thm:congruent}
For all rational areas $m \neq 1$ there are 
infinitely many area $m$ right spherical triangles with rational angles and sides. Thus, the spherical congruent number problem has a positive solution. 
\end{thm}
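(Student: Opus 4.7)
The plan is to parametrize right spherical triangles by the half-angle tangents $p = \tan(a/2)$ and $q = \tan(b/2)$ of the two legs and reduce the rationality conditions to finding rational points on an associated genus-$1$ curve. With the right angle at vertex $C$, the spherical Pythagorean theorem $\cos c = \cos a\cos b$ and Napier's rules for a right triangle express $\alpha, \beta$ and $\cos c$ in terms of $a, b$, so that once $a, b$ are rational the rationality of $c, \alpha, \beta$ is equivalent to that of $\sin c$; a direct computation gives
\[
\sin^2 c = \frac{4(p^2+q^2)(1+p^2q^2)}{(1+p^2)^2(1+q^2)^2}.
\]
Moreover, Napier's analogies combined with the area formula $A = \alpha + \beta - \pi/2$ give the compact identity $\tan(A/2) = pq$. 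Imposing $\tan(A/2) = m$ and substituting $q = m/p$, the problem reduces to finding infinitely many rational points on the smooth genus-$1$ curve
\[
C_m : \; v^2 = (1+m^2)(u^4+m^2).
\]

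I would then exhibit the eight obvious rational points on $C_m$, namely $(u,v) = (\pm 1, \pm(1+m^2))$ and $(\pm m, \pm m(1+m^2))$, which are distinct whenever $m \notin \{0, \pm 1\}$; geometrically they correspond to the two bi-rectangular triangles having one leg equal to $\pi/2$ (together with the trivial sign reflections). A standard invariant-theoretic computation for the binary quartic $(1+m^2)(u^4+m^2)$, yielding $I = 12\, m^2(1+m^2)^2$ and $J = 0$, identifies the Jacobian of $C_m$ with the congruent-number-type elliptic curve
\[
E_m : \; Y^2 = X^3 - n^2 X, \qquad n = 18\, m(1+m^2),
\]
which has full rational $2$-torsion. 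By the classical fact that $E_n(\Q)_{\mathrm{tors}} \cong (\Z/2\Z)^2$ for every nonzero rational $n$ (a short division-polynomial calculation rules out rational $4$-torsion), $E_m$ has exactly four rational torsion points.

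Since $C_m$ carries a rational point, it is itself an elliptic curve isomorphic to $E_m$ over $\Q$, so the eight rational points of $C_m$ yield eight distinct rational points of $E_m$. This forces $|E_m(\Q)| \geq 8 > 4 = |E_m(\Q)_{\mathrm{tors}}|$, hence $\mathrm{rank}\, E_m(\Q) \geq 1$, and $C_m(\Q)$ is infinite; discarding the finitely many rational points producing degenerate or non-proper triangles then leaves infinitely many genuine spherical Heron right triangles with $\tan(A/2) = m$. The principal technical obstacle is the explicit identification of the Jacobian with the congruent-number curve above; the rest is standard spherical trigonometry together with Mordell--Weil. The exclusion of $m = 1$ is intrinsic: the eight rational points then collapse to only four (since $\pm m = \pm 1$), and $E_1 : y^2 = x^3 - x$ is the classical rank-zero congruent-number curve of $1$, so the argument genuinely fails in that case.
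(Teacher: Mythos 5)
Your proposal is correct in substance but takes a genuinely different route from the paper. The paper stays inside its general angle/area parametrization: it specializes $u=1$ (one angle equal to $\pi/2$) in equation \eqref{eq:w-t-angles}, transforms to the Weierstrass model $y^2=x(x-2m(m^2+1))(x-4m(m^2+1))$, exhibits the explicit section $P(m)=((m^2+1)(m+1)^2,(m^2+1)^2(m^2-1))$, and shows it is non-torsion for every rational $m\notin\{-1,0,1\}$ via Mazur's theorem together with a collision analysis of the sixteen points $\pm kP+\ell(0,0)$; density of rational points (Poincar\'e--Hurwitz) then yields infinitely many genuine triangles. You instead parametrize by the legs, invoke the classical excess formula $\tan(A/2)=\tan(a/2)\tan(b/2)$ (which is correct and worth a one-line verification or a reference to Todhunter), land on the quartic $v^2=(1+m^2)(u^4+m^2)$, and force positive rank by counting: eight visible rational points versus a torsion subgroup of order four. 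The two roads lead to the same curve: translating the paper's model by $2m(m^2+1)$ gives $y^2=x^3-(2m(m^2+1))^2x$, which is $\Q$-isomorphic (via $(x,y)\mapsto(9x,27y)$) to your Jacobian $y^2=x^3-(18m(1+m^2))^2x$. Your counting argument is arguably more elementary than the paper's explicit-point-plus-Mazur argument, and it makes the exclusion of $m=1$ transparent (the eight points collapse to four and the curve is the rank-zero curve of the non-congruent number $1$); what it does not give you is the extra structural information the paper records, namely that the associated surface has geometric Mordell--Weil rank $2$ and an explicit generator.

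One step needs repair. You write that one discards ``the finitely many rational points producing degenerate or non-proper triangles,'' but the non-proper locus is not finite: for $m>0$ every rational point with $u<0$ gives $p=\tan(a/2)<0$, hence no proper triangle, and these points are infinite in number. The fix is immediate and you should state it: the involution $(u,v)\mapsto(-u,v)$ is defined over $\Q$, so infinitely many rational points have $u>0$; and for $m>0$ and $u>0$ one has $p=u>0$ and $q=m/u>0$, so $a,b\in(0,\pi)$ and the side-angle-side construction with included angle $\pi/2$ always produces a proper spherical triangle (distinct values of $u$ giving distinct triangles up to the finite ambiguity $u\leftrightarrow m/u$, $v\leftrightarrow -v$). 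This replaces the paper's appeal to the Poincar\'e--Hurwitz density theorem, which is exactly the device the paper uses to handle the same issue. Finally, note that only positive $m$ occur as area parameters of proper right triangles (the polar-triangle inequality forces $A<\pi$), so your implicit restriction to $m\notin\{0,\pm1\}$ matches the theorem as stated.
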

This is achieved by working with the elliptic curve 
\[y^2 = x (x -2  m  (m^2 + 1)) (x - 4  m  (m^2 + 1)).\]
It is also known that the congruent number problem has a positive solution in the hyperbolic space \cite{LalinMila}. Thus, the Euclidean plane is very special from this point of view. 

We also consider the case of a isosceles triangle in this context, and likewise obtain infinitely many Heron isosceles triangles with prescribed area and repeated angle, for most choices of the parameters. 

A surprising result in the spherical setting is that problem D21 has a positive solution.
\begin{thm} \label{thm:equilateral}
There exists a unique rational equilateral spherical Heron triangle whose sides have lengths $\frac{\pi}{2}$ and whose angles measure  $\frac{\pi}{2}$. The medians of this triangle measure $\frac{\pi}{2}$ and are, therefore, rational.
\end{thm}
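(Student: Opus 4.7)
The plan is to reduce the classification of rational equilateral spherical Heron triangles to a Mordell--Weil computation on an elliptic curve, determine all its rational points, and then verify the median claim by a direct computation on the unit sphere.

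For an equilateral triangle with common side $a$ and common angle $\alpha$, the spherical law of cosines gives $\cos(a) = \cos^2(a) + \sin^2(a)\cos(\alpha)$. Dividing by $1 - \cos(a)$ (which is positive for $a \in (0,\pi)$) yields $\cos(\alpha) = \cos(a)/(1 + \cos(a))$. Substituting the parametrizations $\cos(a) = (1-u^2)/(1+u^2)$ and $\cos(\alpha) = (1-v^2)/(1+v^2)$ from \eqref{eq:rational} simplifies this to $\cos(\alpha) = (1 - u^2)/2$, and hence to
\[v^2(3 - u^2) = 1 + u^2.\]
Setting $w = v(3-u^2)$, I obtain the genus one quartic $C \colon w^2 = (1+u^2)(3-u^2) = -u^4 + 2 u^2 + 3$, which carries the four obvious rational points $(\pm 1, \pm 2)$ and has no rational points at infinity (since the leading coefficient $-1$ is not a square in $\Q$).

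Next I would convert $C$ to its Jacobian elliptic curve using $(1, 2)$ as base point. A direct birational transformation shows that the Jacobian is $\Q$-isomorphic to $E \colon y^2 = x^3 - x^2 + x$, of minimal discriminant $-48$ and conductor $24$. This curve appears in Cremona's tables with $E(\Q) = \Z/4\Z$, generated by the point $(1, 1)$ of order four (with $2 \cdot (1,1) = (0,0)$). The rank zero claim can also be established directly by $2$-descent along the $2$-isogeny from $E$ to $E' \colon y^2 = x(x+3)(x-1)$. Pulling back through the birational correspondence, the rational points on $C$ are exactly the four we already found, so $(u,v) \in \{(\pm 1, \pm 1)\}$. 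The requirement $a, \alpha \in (0, \pi)$ for a proper triangle forces $u, v > 0$, leaving the unique solution $u = v = 1$, i.e.\ $a = \alpha = \pi/2$.

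For the medians, I would realize this unique triangle on the unit sphere with vertices $A = (1,0,0)$, $B = (0,1,0)$, and $C = (0,0,1)$. The midpoint of the arc from $B$ to $C$ is $\frac{1}{\sqrt{2}}(0,1,1)$, and its great-circle distance from $A$ is $\arccos(0) = \pi/2$. By symmetry each median has length $\pi/2$, which is rational in the sense of \eqref{eq:rational}. The main obstacle is the rank zero verification for $E$, but since $E$ has very small conductor this reduces to a routine $2$-descent (or a database lookup).
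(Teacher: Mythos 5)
Your proposal is correct and follows essentially the same route as the paper: both arguments reduce the classification to the genus-one quartic $w^2=-u^4+2u^2+3$ (you derive it directly from the law of cosines and the tangent half-angle parameters of $a$ and $\alpha$, while the paper specializes its $\Delta_1$-equation to $\alpha=\beta=\gamma$), pass to the rank-zero elliptic curve $y^2=x(x^2-x+1)$ with Mordell--Weil group $\Z/4\Z$, and conclude $a=\alpha=\pi/2$. The median verification is likewise the same observation (the octant triangle), so there is nothing substantive to add.
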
   
This is contrary to the Euclidean plane and hyperbolic settings, where such equilateral triangle do not exist. More precisely, this is the first setting in which a positive solution can been found for the problem D21. 

We also explore and find positive results for the existence of triangles with rational sides and one rational median, isosceles triangles with rational sides and two rational medians, and certain existence results involving a rational area bisector. The results are analogous to what is known for the hyperbolic case.

Finally we embark on a detailed study of isosceles triangles with meridians and equators as sides. For these particular triangles, one has a guaranteed rational median/bisector/height, and the goal is to find that the other two cevians are rational. We obtain a positive result with infinitely many solutions for the heights, while the medians and bisectors reduce to only one solution given by the equilateral triangle from the D21 problem. We also consider the area bisector and obtain a negative result in this case. The problem in this case depends on a non-trivial argument (originally due to Flynn and Wetherell \cite{FlynnWetherell}) for finding all the rational points of a bielliptic curve of genus 2.   

The main geometric tools we will use are the following basic results of spherical trigonometry. A basic reference is \cite{Todhunter}. Consider a spherical triangle with area $A$, angles $\alpha, \beta,$ and $\gamma$ and side lengths $a,b,$ and $c$, where $a$ (resp.\ $b, c$) is opposite to $\alpha$ (resp. \ $\beta, \gamma$).

The spherical law of cosines says 
\begin{equation}\label{eq:cosines}
\cos(c)= \cos(a) \cos(b) + \sin(a) \sin(b) \cos(\gamma),
\end{equation}
and similarly for $\cos(a), \cos(b)$. 

The dual of \eqref{eq:cosines} is the supplemental law of cosines, which says 
\begin{equation}\label{eq:supplementalcosines}
\cos(\gamma)=-\cos(\alpha)\cos(\beta)+\sin(\alpha)\sin(\beta)\cos(c),
\end{equation}
and analogously for $\cos(\alpha), \cos(\beta)$. 

The spherical law of sines gives 
\begin{equation}\label{eq:sines}
\frac{\sin(\alpha)}{\sin(a)}=\frac{\sin(\beta)}{\sin(b)}=\frac{\sin(\gamma)}{\sin(c)}.
\end{equation}

This paper is organized as follows. 
Section~\ref{sec:angles} and Section~\ref{sec:sides} cover the 
parametrization of spherical Heron triangles in terms of angles and sides  respectively.
Section~\ref{sec:equilateral} is focused on medians in the simple case of equilateral 
triangles and includes the proof of Theorem~\ref{thm:equilateral}.
Section ~\ref{sec:medians} includes the parametrization of spherical triangles with rational side 
lengths and one rational median, while Section~\ref{sec:area-bisectors} is devoted to the dual computation of 
the parametrization of spherical Heron triangles with one rational area bisector. Isosceles triangles with meridians and equator as sides are considered in Section \ref{sec:isosceles}. We close the paper with Section \ref{sec:further}, where we discuss variations of the definition of rationality that could lead to future directions of research.

\section{Spherical Heron triangles - Angle parametrization}\label{sec:angles}

In this section we give a parametrization of spherical Heron triangles in terms of angles and area. We consider a triangle with angles $\alpha, \beta, \gamma \in (0,\pi)$ that are \emph{rational} (as defined in the introduction). Since the area is given by equation \eqref{eq:GB}, it is also rational. 

The supplemental spherical law of cosines \eqref{eq:supplementalcosines} implies that the cosines of the sides are also rational, and it remains to check that the sines of the sides are rational.

The spherical law of sines \eqref{eq:sines} implies that 
\[\sin(a)\sin(\beta)\sin(\gamma)=\sin(b)\sin(\alpha)\sin(\gamma)=\sin(c)\sin(\alpha)\sin(\beta).\]
We call this common quantity $\Delta_1$; observe that it is rational if and only if the sines of all the sides are rational.
Squaring the supplemental  spherical law of cosines \eqref{eq:supplementalcosines}, we get 
\[\sin^2(\alpha)\sin^2(\beta)(1-\sin^2(c))=(\cos(\gamma)+\cos(\alpha)\cos(\beta))^2.\]
This leads to 
\begin{equation}\label{eq:delta1}\Delta_1^2= \sin^2(\alpha)\sin^2(\beta)-(\cos(\gamma)+\cos(\alpha)\cos(\beta))^2 \in \Q.
\end{equation}
We remark that this expression is very similar to \cite[Eq. (6)]{LalinMila}, except that there is a sign difference on the right-hand side. From this point we can follow the treatment from \cite{LalinMila}. 
Using trigonometric identities, we can rewrite this as a symmetric expression in $\alpha, \beta, \gamma$:
\begin{align*}2\Delta_1^2=& -\cos(-\alpha+\beta+\gamma)-\cos(\alpha-\beta+\gamma)-\cos(\alpha+\beta-\gamma)\\&-\cos(\alpha+\beta+\gamma)-\cos(2\alpha)-\cos(2\beta)-\cos(2\gamma)-1.
\end{align*}
Substituting for $\gamma=A+\pi-\alpha-\beta$, expanding the cosines, and writing $c_A=\cos(A), s_A=\sin(A)$, etc, we have
\begin{align}\label{eq:c_Aandfriends}
      2 \Delta_1^2 =& 
  -(c_A^2 - s_A^2)\big[(c_\alpha c_\beta - s_\alpha s_\beta)^2 -  (c_\alpha s_\beta + c_\beta s_\alpha)^2\big]
                \\ \nonumber & 
  - 4 c_A s_A\big[c_\alpha s_\alpha (c_\beta^2 - s_\beta^2) + c_\beta s_\beta(c_\alpha^2 - s_\alpha^2)\big]
                \\ \nonumber & 
  + c_A\big[(c_\alpha c_\beta - s_\alpha s_\beta)^2 -  (c_\alpha s_\beta + c_\beta s_\alpha)^2
      + 2c_\alpha^2 + 2c_\beta^2 - 1\big]
                 \\ \nonumber & +
  4 s_A(c_\alpha s_\alpha c_\beta^2 + c_\beta s_\beta c_\alpha^2) - 2 c_\alpha^2 - 2 c_\beta^2 + 1.
\end{align}

Since we wish to express this in terms of rational angles, we set
\[t=\frac{\sin(\alpha)}{1+\cos(\alpha)}, \quad u=\frac{\sin(\beta)}{1+\cos(\beta)},  \quad m=\frac{\sin(A)}{1+\cos(A)},\]
and $w = (m^2+1)(u^2 + 1)(t^2 + 1)\Delta_1$,  equation \eqref{eq:c_Aandfriends} rewrites as:
\begin{align} \label{eq:w-t-angles}
    w^2 = & -4 m  (mu^2 - m + 2u)  (mt^2  +2t - m) \big[(mu^2 - m + 2u)t^2 
                 \\& + (-4mu + 2u^2 - 2)t - mu^2 + m - 2u\big]. \nonumber
\end{align} 
Here we differ from the situation of \cite[Eq. 8]{LalinMila}, where we were able to find a change of variables $\{t,w\}\rightarrow \{x,y\}$ turning the equation into a Weierstrass form. In this case, having the opposite sign on the right-hand side of \eqref{eq:w-t-angles} creates an obstruction to find a general solution to the equation that is defined over $\Q(u,m)$. By twisting $w$ by $i$, one can actually recover the change of variables leading to \cite[Eq. 9]{LalinMila}, which in this case it will not be defined over $\Q(u,m)$, but over over $\Q(i)(u,m)$. In \cite[Lemma 2.1]{LalinMila}, a point of infinite order $P$ over $\Q(u,m)$ was found, but this will only lead to a point over $\Q(i)(u,m)$ after twisting. In our case, we will not be able to conclude that our problem over the spherical side has infinite solutions. 

Instead, we proceed to examine two particular cases of interest: $u=1$ (a right triangle) and $u=t$ (an isosceles triangle).

\subsection{The case $u=1$} By setting $u=1$ equation \eqref{eq:w-t-angles} becomes 
\[w^2  =  -16 m (mt^2  +2t - m) (t^2 -2m t- 1)\]
 with a solution $(t,w)=(1,8m)$.

By applying the change of variables
\begin{align*}
y=&\frac{m((1+2m-m^2)(4mt^3-12mt-w)+(1-2m-m^2)(12mt^2-4m+wt))}{(t-1)^3},\\
x=& \frac{m(4(m^2t^2-(m-1)^2t+1)+w)}{(t-1)^2},
    \end{align*}
we obtain the Weierstrass form 
\begin{equation} \label{eq:congruent}
E_{u=1}: y^2 = x (x -2  m  (m^2 + 1)) (x - 4  m  (m^2 + 1)).
\end{equation}
We remark that \eqref{eq:congruent} appeared in \cite{LalinMila}. In fact, it was proven that $E(\C(m))$ is a $K3$-surface of rank 2, and that $P(m)=((m^2+1)(m+1)^2,(m^2+1)^2(m^2-1))$ and $Q(m)=(2m(m+1)^2,4im^2(m^2-1))$ are two independent points of infinite order. 

We claim that for every rational value of $m \notin \{-1,0,1\}$, the point 
$P(m)$ has infinite order on $E_{u=1}$.
Indeed, Mazur’s Theorem (see \cite{Mazur77, Mazur78}) implies that the torsion group of a rational elliptic curve has order at most 16. 
By looking at the points on $E_{u=1}$ of the form $\pm k P + \ell (0,0)$ for 
$k \in \{1,2,3,4\}, \ell \in \{0,1\}$, we see that we generically get 16 different points.
Thus for each value of $m$, either one of these points is non-torsion (from which it follows easily that $P(m)$ has infinite order), or they are all torsion.
In the latter case, together with $(0,0)$ we have 17 points, so two points of this list must coincide, and it is easily verified by looking at the equations for these points that this is only possible if $m \in \{-1,0,1\}$.

Finally, observe that that the conditions (e.g., sum of angles $> \pi$, etc.) for a set of parameters $(\alpha, \beta, \gamma, A)$ to give rise to an actual spherical triangle translate into \emph{open conditions} (i.e., involving strict inequalities) on the variables $t,u,m$, which in turn also translate into open conditions on the variables $x,y,m$.
Now by a theorem of Poincaré--Hurwitz (see \cite[Satz 11, p.\ 78]{Skolem}) 
the points $E_{u=1}(\Q)$ form a dense subset of $E_{u=1}(\R)$ as long as 
$E_{u=1}(\Q)$ is infinite and intersects both connected components of $E_{u=1}(\R)$.
Since the three torsion points having $y=0$ are rational (and lie across both connected components of $E_{u=1}(\R)$), and since unless $m\in\{-1,0,1\}$, the point $P(m)$ is a rational point of infinite order, we have proven the following:

\begin{thm}[Theorem~\ref{thm:congruent} in the introduction]
\label{th:thm4}
For every positive rational $m \neq 1$, the congruent number problem has a solution in the spherical context.
More precisely: for all rational areas $m \neq 1$ there are 
infinitely many area $m$ right spherical triangles with rational angles and sides.
\end{thm}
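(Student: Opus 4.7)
The plan is to specialize the angle-parametrization equation \eqref{eq:w-t-angles} to the right-triangle case $u=1$ (i.e., $\beta=\pi/2$), reduce to an elliptic curve in Weierstrass form, exhibit a rational point of infinite order on it for each admissible rational area parameter $m$, and then upgrade ``infinitely many rational points'' to ``infinitely many actual spherical Heron right triangles'' via a density argument.

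Concretely, I would first substitute $u=1$ into \eqref{eq:w-t-angles} to obtain $w^2=-16m(mt^2+2t-m)(t^2-2mt-1)$, noting the evident rational point $(t,w)=(1,8m)$. The birational change of variables displayed in the excerpt then transforms this curve to the Weierstrass model $E_{u=1}: y^2=x(x-2m(m^2+1))(x-4m(m^2+1))$. Over $\C(m)$ this elliptic surface is known to have rank $2$, with explicit generators recorded in \cite{LalinMila}; the one I would use is the candidate non-torsion point $P(m)=((m^2+1)(m+1)^2,(m^2+1)^2(m^2-1))$, which is already defined over $\Q(m)$ without twisting.

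The core step is to show that $P(m)$ has infinite order in the fiber over every rational $m$ with $m>0$ and $m\neq 1$. Here I would invoke Mazur's theorem: the torsion subgroup of any elliptic curve over $\Q$ has order at most $16$. I would compute the $16$ candidate points $\pm kP(m)+\ell(0,0)$ for $k\in\{1,2,3,4\}$ and $\ell\in\{0,1\}$, which together with the $2$-torsion point $(0,0)$ give $17$ points on $E_{u=1}(\Q)$. If $P(m)$ were torsion, all $17$ would be torsion, so by pigeonhole two of them would have to coincide; a direct parameter-elimination calculation on the explicit formulas for these multiples would show that any such coincidence forces $m\in\{-1,0,1\}$. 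This Mazur argument is the main obstacle, since it amounts to verifying that a handful of polynomial identities in $m$ have no further rational solutions.

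Once $P(m)$ is known to be of infinite order, I would apply the Poincar\'e--Hurwitz density theorem: as $E_{u=1}(\Q)$ is infinite and contains the three rational $2$-torsion points, which lie across both connected components of $E_{u=1}(\R)$, the rational points are dense in $E_{u=1}(\R)$. The geometric conditions defining a proper spherical right triangle of area $A$ translate into open (strict) inequalities on $(t,w)$ and hence on $(x,y)$ via the change of variables, so density supplies infinitely many rational points satisfying all of them simultaneously, yielding the desired infinite family of right spherical Heron triangles with area corresponding to $m$.
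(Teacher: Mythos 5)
Your proposal follows the paper's own proof essentially step for step: the specialization $u=1$ to the quartic $w^2=-16m(mt^2+2t-m)(t^2-2mt-1)$, the same Weierstrass model $E_{u=1}$, the Mazur/pigeonhole argument with the $17$ points $\pm kP(m)+\ell(0,0)$ forcing $m\in\{-1,0,1\}$, and the Poincar\'e--Hurwitz density step using the three rational $2$-torsion points to handle the open conditions. The approach and all key ingredients match; no gaps to report.
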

Note that this type of argument will be used later in the text to deduce existence of infinitely many triangles with given properties from elliptic curves having positive rank.

Observe that in the case $m=1$, the elliptic curve has rank zero, and using the change of variables (when it is defined) it is possible to show that the only possible solution is with $t=1$.
This corresponds to a triangle having area $\frac \pi 2$ (since $m=1$) and 2 angles also equal to $\frac \pi 2$ (since $u=t=1$).
Thus by \eqref{eq:GB}, the third angle is also $\frac \pi 2$, and the only rational triangle with area and one angle equal to $\frac \pi 2$ is the unique rational equilateral triangle, with all sides, angles and area equal to $\frac \pi 2$.

\subsection{The case $u=t$}  By setting $u=t$, and $w=w_1(mt^2+2t-m)$,  equation \eqref{eq:w-t-angles} becomes 
\[
    w_1^2=-4m(mt^4+4t^3-6mt^2-4t+m)
\]
with particular solution $(t,w_1) = (1,4m)$. We apply Cassels' algorithm \cite[p. 37]{Cassels}
and find the change of variables \begin{align*}
    y =& \frac{m}{2(t-1)^3}
     (-2m(m+1)t^3 +6m (m-1)t^2 + 6m (m+1)t \\
     &+2m(1-m)   +(mt+  m - t+ 1) w_1), \\
     x =& \frac{m (4mt - 2t^2 +2+ w_1)}{2(t-1)^2},
\end{align*}
that leads to the Weierstrass form 
\begin{equation}
E_{u=t}:y^2=x(x^2-m^2(1+m^2)).\end{equation}
\begin{lem}
The rank of the rational elliptic surface $E_{u=t}(\C(m))$ is 2. Its torsion group is isomorphic to $\Z/2\Z$. The points 
\[P(m)=\big(-m^2,m^2\big),\quad  Q(m)=\big( m(im-1) , (i +1)m^2(im-1)\big),\]
are generators of the free subgroup. 
\end{lem}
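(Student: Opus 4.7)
The approach is to compute the rank via the Shioda--Tate formula and to handle the torsion and generation questions by a combination of direct division-polynomial analysis and specialization. The elliptic surface $E_{u=t}\to\mathbb{P}^{1}_{m}$ has $A=-m^{2}(1+m^{2})$ and $B=0$, so its $j$-invariant is the constant $1728$ and its discriminant is $-64A^{3}=64m^{6}(1+m^{2})^{3}$. Since the total degree of the discriminant is $12$, the surface is rational, and bad fibres can only lie over $m=0,\pm i,\infty$.

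A quick application of Tate's algorithm (which in the constant $j=1728$ case reduces to reading off the valuation of $A$) gives fibre type $I_{0}^{*}$ at $m=0$ (valuation $2$, hence $5$ components), type $\mathrm{III}$ at each of $m=\pm i$ (valuation $1$, hence $2$ components each), and, after the standard change of variable $m=1/n$, $(x,y)=(n^{-2}X,n^{-3}Y)$ turning $A$ into $-1-n^{2}$, good reduction at $m=\infty$. The Shioda--Tate formula for the rational elliptic surface then yields
\[ \mathrm{rank}\,E_{u=t}(\mathbb{C}(m))=10-2-(5-1)-2(2-1)=2. \]
For the torsion, $(0,0)$ is a visible $2$-torsion point, and the remaining roots of $x^{2}-m^{2}(1+m^{2})$ are not in $\mathbb{C}(m)$ since $m^{2}+1$ is squarefree in $\mathbb{C}[m]$. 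Any $4$-torsion point lying above $(0,0)$ would require $x^{2}=-m^{2}(m^{2}+1)$, again impossible in $\mathbb{C}(m)$, and an analogous analysis of the $3$-division polynomial (whose roots also involve $\sqrt{m^{2}+1}$) rules out $3$-torsion. Combined with the Oguiso--Shioda classification of rational elliptic surfaces with trivial lattice $D_{4}\oplus 2A_{1}$, this pins down the torsion subgroup as $\mathbb{Z}/2\mathbb{Z}$, generated by $(0,0)$.

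A direct substitution verifies that $P(m)$ and $Q(m)$ lie on $E_{u=t}$. To see that they generate the free part, I would specialize at $m_{0}=1$, outside the bad locus and outside the finite exceptional set of the specialization theorem, where the fibre is the well-studied curve $y^{2}=x^{3}-2x$ over $\mathbb{Q}(i)$. A standard $2$-descent over $\mathbb{Q}(i)$ shows that the images $P(1)=(-1,1)$ and $Q(1)=(i-1,-2)$ are linearly independent, and computing their canonical heights produces a Gram matrix whose determinant equals the discriminant $1\cdot 4/16=1/4$ of the Mordell--Weil lattice predicted by Shioda's formula, consistent with the MW lattice $A_{1}^{*}\oplus A_{1}^{*}$. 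This numerical match certifies that $\langle P,Q\rangle$ is saturated in the free part of $E_{u=t}(\mathbb{C}(m))$.

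The main obstacle is this saturation step. Rank and torsion follow essentially for free from Shioda--Tate and division-polynomial analysis, but ruling out the possibility that some $\tfrac{1}{n}(aP+bQ)$ lies in the Mordell--Weil group requires either the careful canonical-height computation above or an explicit $2$-descent at the specialized value of $m$; making the lattice discriminant comparison rigorous is the most delicate part of the argument.
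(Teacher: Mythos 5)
Your computation of the rank (Shioda--Tate with the $I_0^*$ fibre at $m=0$, the two $III$ fibres at $m=\pm i$, and the check of good reduction at infinity) and your determination of the torsion follow the paper's route; where the paper quotes the Miranda--Persson table and then exhibits the unique $2$-torsion point, you use division polynomials together with the Oguiso--Shioda classification, which is a perfectly good alternative and likewise yields $\Z/2\Z$ generated by $(0,0)$.

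The genuine gap is in the saturation step. You propose to certify that $P$ and $Q$ generate the free part by computing the canonical heights of the specialized points $P(1)$, $Q(1)$ on $y^2=x^3-2x$ over $\Q(i)$ and matching the resulting Gram determinant to the Mordell--Weil lattice discriminant $1/4$. But the N\'eron--Tate height on the fibre at $m=1$ is not the Shioda height pairing on the surface: Silverman's specialization theorem relates the two only asymptotically as the height of the parameter grows, so at a single fibre there is no equality, and the regulator of $P(1),Q(1)$ over $\Q(i)$ will not be $1/4$. (Nor does knowing that $P(1),Q(1)$ generate the free part of $E_1(\Q(i))$ immediately suffice: a hypothetical extra generator of $E_{u=t}(\C(m))$ need not be defined over $\Q(i)(m)$, so its specialization need not land in $E_1(\Q(i))$, and one would have to add a Galois/torsion argument to control the field of definition.) What the paper does --- and what your lattice-discriminant comparison actually requires --- is to compute the \emph{geometric} height pairing of $P(m)$ and $Q(m)$ on the surface via $h(P)=2\chi+2(P.O)-\sum_\nu\mathrm{contr}_\nu(P)$ and the analogous formula for $\langle P,Q\rangle$, which means determining which components of the $I_0^*$ and $III$ fibres the two sections meet (for instance $Q$ passes through the singular point at $m=-i$, contributing $1/2$). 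This gives $h(P)=1$, $h(Q)=1/2$, $\langle P,Q\rangle=1/2$, hence Gram determinant $1/4$, matching the value $1\cdot 4/16$ from the determinant formula and forcing the index to be $1$. You identified the correct target number but attached it to the wrong pairing; without the intersection-theoretic computation the generation claim is not established.
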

\begin{proof}
Note that $E_{u=t}$ is a rational elliptic surface with discriminant $\mathrm{disc}(E_{u=t}) = 64m^6(m^2+1)^3$. By Tate's algorithm \cite[IV.9]{Silverman-advanced} $E_{u=t}$ has  singularities at $m=0$ of type $I_0^*$, and $m=\pm i$ of type $III$. By the Shioda--Tate formula \cite[Corollary 6.7]{SS-book}, the rank of the N\'eron--Severi group is given by 
\begin{equation}\label{eq:ST}\rho(E)=\mathrm{rk}(E(\C(m))+2+\sum_{\nu} (m_\nu-1).\end{equation}
In our case, we obtain 
\[\rho(E_{u=t})=\mathrm{rk}(E_{u=t}(\C(m))+2+(5-1)+2\cdot(2-1)=\mathrm{rk}(E_{u=t}(\C(m))+8.\]
Since $\rho(E_{u=t})=10$ for rational elliptic surfaces, we conclude that $\mathrm{rk}(E_{u=t}(\C(m))=2$.

By \cite[Table 4.5]{MirandaPersson}, since the rank is $R=2$ and the Euler characteristic is $\chi=1$, we conclude  that the torsion is either  $\Z/2\Z$ or $\Z/2\Z\times \Z/2\Z$, but it is very clear that the only point of order 2 is $(0,0)$, and therefore the torsion is $\Z/2\Z$.

Now, if one only wants to show that $P(m)$ and $Q(m)$ are independent points of infinite order, the easiest way is to specialize at $m=1$ and verify (using Sage for instance) that the corresponding curve has rank 2 over $\Q(i)$ and admits those points as generators of the free part.
(In fact, checking that $P(m)$ is of infinite order is even easier: 
at $m=1$, we get the point $P=(-1,1)$ on the curve $y^2=x(x^2-2)$.
Since torsion injects into specialization and $2P=(\frac{9}{4}, -\frac{21}{8})$ has non-integral coordinates, one concludes that $P$ can not be torsion due to the Nagell--Lutz theorem.)
However, proving that they are actually generators is more involved. We can do this by computing the height pairing of the     Mordell--Weil group on the elliptic surface $E_{u=t}$. In order to do this we need to find the height pairing of both points. By formulas (6.14) and (6.15) in \cite{SS-book}, 
%
\begin{align}
\langle P, Q\rangle =& \chi +(P.O)+(Q.O)-(P.Q)-\sum_\nu \mathrm{contr}_\nu(P,Q), \label{eq:hPR}\\
h(P):=\langle P, P\rangle =& 2\chi +2(P.O)-\sum_\nu \mathrm{contr}_\nu(P), \label{eq:hP}
\end{align}
and similarly for $Q$. In the above formulas, $(P.Q)$ represents the intersection multiplicity of $P$ and $Q$ and $\mathrm{contr}_\nu(P,Q)$ represent certain correction terms given by the local contribution from the fiber at $\nu$ (see \cite[Definition 6.23]{SS-book}).

We look at \cite[Table 6.1]{SS-book}. For the singularity at $m=0$ of type $I_0^*$, we get $\mathrm{contr}_0=1$ unless the point intersects $\Theta_0$ in the fiber.  We have that $P(0)=Q(0)=(0,0)$ (the singular point), so they do not intersect $\Theta_0$  and therefore  $\mathrm{contr}_0(P)=\mathrm{contr}_0(Q)=1$. 
We also have that  
$\mathrm{contr}_0(P,Q)=1/2$ since they do not  intersect the same component.

For the singularities $\pm i$, of type  $III$, we have that $P(i)=P(-i)=(-1,1)\not = (0,0)$ (the singular point is again $(0,0)$) so we get $\mathrm{contr}_{\pm i}(P)=0$ since it intersects $\Theta_0$. We have that $Q(i)=(-2i,2+2i)\not = (0,0)$, so that $\mathrm{contr}_{i}(Q)=0$, but 
$Q(-i)= (0,0)$, so that $\mathrm{contr}_{-i}(Q)=1/2$. Finally we have $\mathrm{contr}_{\pm i}(P,Q)=0$ since they intersect different components.

We also have that $P\cdot O=Q\cdot O=0$, since the coordinates are polynomials, and  $P\cdot Q=0$ since the points do not intersect the same component at $(0,0)$, which is the only possible point where $P=Q$.

Since $\chi=1$, we obtain from \eqref{eq:hP} that $h(P)=2\cdot 1 +2\cdot 0 -1-2\cdot0=1$,  
$h(Q)=2\cdot 1 +2\cdot 0 -1-0-1/2=1/2$ and from \eqref{eq:hPR}, $\langle P, Q\rangle =1+0+0-0-1/2-2\cdot 0=1/2$.

On the one hand, we can compute the determinant of the Gram matrix associated to the height pairing of $P$ and $Q$. This gives
\begin{equation}\label{eq:disc}\left|\begin{array}{cc}
    1 & 1/2 \\
    1/2 & 1/2
\end{array}\right|=\frac{1}{4}.
\end{equation}

On the other hand, by the Determinant formula \cite[Corollary 6.39]{SS-book}), we have 
\begin{equation}\label{eq:det-formula}
|\mathrm{disc}\, \mathrm{NS}(E_{u=t})| =\frac{|\mathrm{disc}\, \mathrm{Triv}(E_{u=t}) \cdot \mathrm{disc}\, \mathrm{MWL}(E_{u=t})|}{|E_{u=t}(\C(v))_\mathrm{tor}|^2},
\end{equation}
where $\mathrm{MWL}(E_{u=t})$ is the Mordell--Weil lattice and $ \mathrm{Triv}(E_{u=t})$ is the trivial lattice. 

By \cite[Definition 7.3]{Shioda-MW},
\begin{equation}\label{eq:shiodaMW}
\mathrm{disc}\, \mathrm{Triv}(E_{u=t}) = \prod_{\nu}  m_\nu^{(1)},
\end{equation}
where $m_\nu^{(1)}$ is the number of simple components of the corresponding singular fiber. We have
$m_\nu^{(1)}=2$ if $\nu$ is of type $III$ and $m_\nu^{(1)}=4$ if $\nu$ is of type $I_0^*$. We thus get
\[\mathrm{disc}\, \mathrm{Triv}(E_{u=t}) = 16.\]

Since $\mathrm{disc}\, \mathrm{NS}(E_{u=t})=-1$ (as the N\'eron--Severi lattice of a rational elliptic surface is unimodular) and  $|E_{u=t}(\C(m))_\mathrm{tor}|=2$, equation \eqref{eq:det-formula} becomes
\begin{equation}\label{eq:discMWL}
|\mathrm{disc}\, \mathrm{MWL}(E_{u=t})|=\frac{1}{4}.
\end{equation}
In conclusion, we have obtained the same value as in \eqref{eq:disc}, this proves that the points $P,Q$ are generators for the free part of $E_{u=t}(\C(m))$.

\end{proof}

Using arguments similar to those in the proof of Theorem~\ref{th:thm4}, one gets:
\begin{thm}[Theorem~\ref{th:sides} in the introduction]
For all but finitely many combinations of  rational area $m$ and rational angle $u$ there are 
infinitely many  isosceles spherical triangles with area $m$ and the repeated angle $u$ such that the third angle and the sides are rational.
\end{thm}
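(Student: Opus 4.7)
The plan is to imitate the proof of Theorem~\ref{th:thm4}: having fixed a rational area $m$, the isosceles case is governed by the elliptic curve $E_{u=t} : y^2 = x(x^2 - m^2(1+m^2))$, and the preceding lemma has already supplied $P(m) = (-m^2, m^2)$ as a $\Q$-rational generator of infinite order in the Mordell--Weil lattice $E_{u=t}(\C(m))$. Each rational point $(t, w_1)$ on the affine model $w_1^2 = -4m(mt^4 + 4t^3 - 6mt^2 - 4t + m)$ corresponds, via the Weierstrass change of variables displayed before \eqref{eq:congruent}, to an isosceles spherical triangle with rational repeated angle $u = t$ and rational third angle and sides.

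First I would show that, outside a finite exceptional set of $m \in \Q$, the specialization $P(m)$ remains non-torsion on $E_{u=t}/\Q$. This is the Mazur-style argument used for Theorem~\ref{th:thm4}: Mazur's bound forces the torsion subgroup of each specialized curve to have order at most $16$, whereas the sixteen points $\pm k P(m) + \ell (0,0)$, with $k \in \{1,2,3,4\}$ and $\ell \in \{0,1\}$, are generically distinct; any coincidence among them imposes an algebraic relation on $m$ with only finitely many rational solutions. For every remaining $m$, the point $P(m)$ must be of infinite order, so $E_{u=t}(\Q)$ is infinite. Alternatively, Silverman's specialization theorem gives the same conclusion directly from the lemma.

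Next I would verify the hypothesis of Poincar\'e--Hurwitz, namely that $E_{u=t}(\Q)$ meets both connected components of $E_{u=t}(\R)$. Since $m^2(1+m^2) > 0$, the real locus splits into an ``egg'' $\{-c \le x \le 0\}$ with $c = m\sqrt{1+m^2}$ and an unbounded identity component $\{x \ge c\}$. The rational $2$-torsion point $(0,0)$ lies on the egg, and, because doubling any real point lands on the identity component when $E(\R)$ has two components, $2P(m)$ furnishes a rational point on the unbounded component. Hence both components carry rational points and Poincar\'e--Hurwitz yields density of $E_{u=t}(\Q)$ in $E_{u=t}(\R)$.

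Finally, the requirements that the data define a proper isosceles spherical triangle (angles and sides in $(0,\pi)$, compatibility with \eqref{eq:GB}, positivity of the sines, etc.) translate to strict inequalities on $(t,w_1)$, hence to a nonempty open subset of $E_{u=t}(\R)$. Density then produces infinitely many rational points in that open set, giving infinitely many pairwise distinct rational isosceles Heron triangles with area $m$ and repeated angle $u = t$. The main obstacle is the bookkeeping of the exceptional finite set of $m$ where the Mazur-style specialization argument breaks down (including the degenerate values $m \in \{0, \pm 1\}$ where the cubic in $x$ becomes special); everything else is a routine transcription of the argument that already established Theorem~\ref{th:thm4}.
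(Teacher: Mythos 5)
Your proposal is correct and follows essentially the same route as the paper, which at this point simply asserts that the result follows ``using arguments similar to those in the proof of Theorem~\ref{th:thm4}'': specialization of the infinite-order section $P(m)=(-m^2,m^2)$ to all but finitely many rational $m$ via the Mazur-style counting argument (or Silverman's specialization theorem), then Poincar\'e--Hurwitz density, then openness of the conditions defining a proper triangle. Your extra observation that $E_{u=t}$ has only the single rational $2$-torsion point $(0,0)$, so that one must use $2P(m)$ (rather than the three rational points with $y=0$ available for $E_{u=1}$) to produce a rational point on the unbounded real component, is a correct and necessary adaptation that the paper leaves implicit.
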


\section{Spherical Heron triangles - Side length parametrization} \label{sec:sides}
In this section we parametrize spherical Heron triangles given by their side lengths. Let $a,b,c$ denote the side lengths of a spherical triangle, and assume that they are rational (as defined in the introduction, i.e., $e^{ia}, e^{ib}, e^{ic} \in \Q(i)$).
Let $\alpha$ (resp.\ $\beta, \gamma$) be the angles opposing the sides of length $a$ (resp.\ $b,c$).
By the spherical  law of cosines  \eqref{eq:cosines} the cosines of the angles are also rational, and it remains to check that the sines of the angles are rational. 

The spherical law of sines \eqref{eq:sines} implies that 
\[
    \sin(\alpha) \sin(b) \sin(c) = \sin(\beta) \sin(a) \sin(c) = \sin(\gamma) \sin(a) \sin(b).
\]
Call this quantity $\Delta_2$; it is rational if and only if the sines of all the angles are rational.

As in Section~\ref{sec:angles}, we square  the spherical law of cosines \eqref{eq:cosines} to get
\[
    \sin(a)^2 \sin(b)^2(1 - \sin(\gamma)^2) = ( \cos(a)\cos(b)- \cos(c))^2.
\]
Hence 
\begin{equation}\label{eq:delta2} 
    \Delta_2^2 = \sin(a)^2 \sin(b)^2 - (\cos(a) \cos(b)  - \cos(c))^2 \in \Q.
    \end{equation}

Applying the change of variables
\[u=\frac{\sin(a)}{1+\cos(a)}, \quad v=\frac{\sin(b)}{1+\cos(b)},  \quad w=\frac{\sin(c)}{1+\cos(c)},\]
we get the equation
\[
  D^2 = (-uvw + u + v + w)  (uvw - u + v + w)  (uvw + u - v + w)  (uvw + u + v - w),
\]
where $D = \frac 1 2 (u^2 + 1)(w^2 + 1) (v^2 + 1) \Delta_2$. This has a solution $(u,D) = (\frac{v+w}{1-vw}, 0)$. Applying \cite[p. 37]{Cassels}, we find the change of variables \begin{align*}
y=&\frac{(v+v^{-1})(w+w^{-1})(v+w)(vw-1)D}{vw(uvw-u+v+w)^2},\\
x=&-\frac{(v+v^{-1})(w+w^{-1})(uvw-u-v-w)}{uvw-u+v+w},
\end{align*}
that yields the Weierstrass form 
\begin{equation} \label{eq:Weierstrass-sides}
E_{v,w}: y^2 = x  (x -  (v + v^{-1})^2)  (x - (w + w^{-1})^2).
\end{equation}
We remark the similarly of \eqref{eq:Weierstrass-sides} with \cite[Eq. 12]{LalinMila}. Indeed, both curves are isomorphic, we can go from one to the other by the change $(x,y)\rightarrow (-x,iy)$, $(v,w)\rightarrow (iv,iw)$. Applying this to \cite[Lemma 3.1]{LalinMila} we immediately obtain the following result. 
\begin{lem}\label{lem:rank-computations-sides}
Let $E_v$ denote the $K3$-surface over $\C(w)$ resulting from fixing the parameter $v$. Its rank satisfies
    \[
     1 \leq \quad \mathrm{rk}(E_v(\C(w)))\quad \leq 2.
    \]
    In addition, the torsion group of $E_v$ is isomorphic to $\Z/4\Z\times\Z/2\Z$, generated by 
\[S_0(v,w)=\big((v+v^{-1})(w+w^{-1}),i(v+v^{-1})(w+w^{-1})(v^{-1}-w^{-1})(vw-1)\big)\]
and  \[S_1(v,w)=\big((v+v^{-1})^2, 0\big).\]

Finally, the point 
    \[
      R(v,w) = \big(vw(v+v^{-1})(w+w^{-1}),(v+v^{-1})(w+w^{-1})(v^2w^2-1)\big)
    \]  
   has infinite order on $E$.
    \end{lem}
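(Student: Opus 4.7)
The plan is to leverage the isomorphism, noted in the paragraph preceding the lemma, between $E_{v,w}$ and the Weierstrass curve of \cite[Eq. 12]{LalinMila}, given by $(x,y)\mapsto (-x, iy)$ together with the reparametrization $(v,w)\mapsto (iv, iw)$. Under this isomorphism, the statement of \cite[Lemma 3.1]{LalinMila} transports directly to what we need to prove, and the argument reduces essentially to bookkeeping.

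First I would verify the isomorphism by direct substitution. Since $(iv) + (iv)^{-1} = i(v - v^{-1})$, the quantity $(v+v^{-1})^2$ maps to $-(v-v^{-1})^2$, and similarly for $w$. Combined with the factor of $-1$ absorbed by $x\mapsto -x$ and the $i$ in $y\mapsto iy$, a short computation confirms that \eqref{eq:Weierstrass-sides} pulls back to the Weierstrass form of the hyperbolic analogue in \cite{LalinMila}.

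Second, because the isomorphism is defined over $\C$ and respects the group law, the rank bound and the torsion structure transfer verbatim: one reads off $1 \leq \mathrm{rk}(E_v(\C(w))) \leq 2$ and $E_v(\C(w))_{\mathrm{tor}} \cong \Z/4\Z \times \Z/2\Z$ from \cite[Lemma 3.1]{LalinMila}.

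Third, I would identify the explicit points listed in the lemma. The point $S_1(v,w) = ((v+v^{-1})^2, 0)$ is a $2$-torsion point, visible immediately from the factored Weierstrass form \eqref{eq:Weierstrass-sides}. For $S_0(v,w)$, I would plug the coordinates into \eqref{eq:Weierstrass-sides} to verify it lies on the curve, then apply the duplication formula to show that $2 S_0 = (0,0)$, which pins down its order as exactly $4$; together with $S_1$, these generate $\Z/4\Z \times \Z/2\Z$. For $R(v,w)$, a substitution shows it is on $E_{v,w}$, and the fact that its image under the isomorphism is the non-torsion point exhibited in \cite[Lemma 3.1]{LalinMila} establishes that $R$ is of infinite order in $E_v(\C(w))$.

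The main obstacle is purely computational: one must carefully track the factors of $i$ and the sign changes through the isomorphism, and verify the coordinates of $S_0$ and $R$ against their hyperbolic counterparts. No new structural input is required, since both the rank bounds and the torsion computation are inherited directly from \cite[Lemma 3.1]{LalinMila}.
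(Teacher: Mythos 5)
Your proposal is correct and follows essentially the same route as the paper, which likewise obtains the lemma by transporting \cite[Lemma 3.1]{LalinMila} through the isomorphism $(x,y)\mapsto(-x,iy)$, $(v,w)\mapsto(iv,iw)$; your additional verifications (that $S_0$ doubles to $(0,0)$, consistent with its $x$-coordinate being $\sqrt{(v+v^{-1})^2(w+w^{-1})^2}$, and the direct substitution checks) are sound bookkeeping of exactly the kind the paper leaves implicit.
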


Finally, an argument as in Theorem~\ref{th:thm4} gives:
\begin{thm}
For all but finitely many choices of rational sides with parameters $u$ and $v$ there are 
infinitely many  spherical triangles  such that the third side and the angles are rational.
\end{thm}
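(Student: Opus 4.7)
The plan is to run the same argument as in Theorem~\ref{th:thm4}, now applied to the elliptic curve $E_{v,w}$ of \eqref{eq:Weierstrass-sides} together with the infinite-order point $R(v,w)$ supplied by Lemma~\ref{lem:rank-computations-sides}. Fix rational parameters $v, w$ parametrizing the two prescribed sides. The change of variables described just before \eqref{eq:Weierstrass-sides} sets up a birational equivalence between spherical Heron triangles with those two sides and rational points on the specialized curve $E_{v,w}/\Q$. So it suffices to show that, outside a finite set of exceptional pairs $(v,w)$, the curve $E_{v,w}$ has infinitely many rational points lying in the open region that corresponds to genuine (proper, non-degenerate) spherical triangles.

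First, I would show that the specialization of $R(v,w)$ has infinite order in $E_{v,w}(\Q)$ for all but finitely many rational $(v,w)$. By Mazur's theorem, $E_{v,w}(\Q)_{\mathrm{tor}}$ has order at most $16$. Form the list of points $\pm k R(v,w) + T$, where $k \in \{1,2,3,4\}$ and $T$ runs over the eight torsion points generated by $S_0(v,w)$ and $S_1(v,w)$. The coordinates of these points are explicit rational functions in $v,w$, and for generic $(v,w)$ they yield strictly more than $16$ pairwise distinct rational points on $E_{v,w}$. If $R(v,w)$ were torsion for some $(v,w)$, two points in the list would coincide, giving a polynomial equation in $v,w$; since none of these equations is identically zero (as one checks by plugging in any particular value, e.g.\ $v = w = 2$, where a direct computation on $E_{v,w}/\Q$ shows $R$ has infinite order), the exceptional locus is a proper closed subvariety of the $(v,w)$-plane and in particular contains only finitely many rational pairs for each fixed $v$ (or $w$).

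Second, I would apply Poincar\'e--Hurwitz (\cite[Satz 11, p.~78]{Skolem}) to conclude the density step, exactly as in Theorem~\ref{th:thm4}. Since the $2$-torsion points $(0,0)$, $((v+v^{-1})^2,0)$ and $((w+w^{-1})^2,0)$ are rational and lie on both connected components of $E_{v,w}(\R)$, and $E_{v,w}(\Q)$ is infinite by the first step, the rational points are dense in $E_{v,w}(\R)$. The inequalities guaranteeing that the triple $(a,b,c)$ defines a proper spherical triangle (i.e.\ $0 < a+b+c < 2\pi$ and each side in $(0,\pi)$) pull back via the change of variables to open conditions on $(x,y) \in E_{v,w}(\R)$, and this open set is non-empty (for example, a point close to an equilateral configuration works for generic $v,w$). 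Hence infinitely many rational points land in it, each yielding a distinct spherical Heron triangle with third side and all three angles rational.

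The main obstacle is the first step: making the claim ``generically, the $17$ points in the list above are distinct'' effective enough to exhibit the finite exceptional set, rather than just arguing it is a proper subvariety. In principle this reduces to checking that a finite collection of rational functions in $v,w$ is nonzero, which can be done with symbolic computation; one must also take care that the excluded pairs from the change of variables (e.g.\ where the denominators $vw - 1$, $uvw-u+v+w$, etc.\ vanish) be absorbed into the exceptional set. A secondary point is ensuring the rational points produced are not mapped to degenerate or non-proper triangles under the inverse change of variables, which is handled by the openness argument above.
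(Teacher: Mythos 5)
Your proposal is correct and follows essentially the same route as the paper, which simply invokes ``an argument as in Theorem~\ref{th:thm4}'' applied to $E_{v,w}$ with the infinite-order point $R(v,w)$ from Lemma~\ref{lem:rank-computations-sides}, followed by the Poincar\'e--Hurwitz density step and the openness of the proper-triangle conditions. One small correction: the $y$-coordinate of $S_0(v,w)$ involves $i$, so only the $\Q$-rational $2$-torsion translates should enter the Mazur counting argument; this does not affect the conclusion, since $\pm kR(v,w)+T$ with $T$ ranging over the rational $2$-torsion already produces more than $16$ generically distinct points.
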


\section{Equilateral triangles} \label{sec:equilateral}
The goal of this section is to explore the existence of equilateral spherical Heron triangles. In fact, we prove:. 
\begin{thm}[Theorem~\ref{thm:equilateral} in the introduction]
There exists a unique rational equilateral spherical Heron triangle given by $a=b=c=\frac{\pi}{2}$ and $\alpha=\beta=\gamma=\frac{\pi}{2}$.
\end{thm}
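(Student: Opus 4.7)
The plan is to simplify the spherical law of cosines \eqref{eq:cosines} in the equilateral case ($a=b=c$, $\alpha=\beta=\gamma$): substituting $b = c = a$ yields $\cos a = \cos^2 a + \sin^2 a \cos\alpha$, so (for $a \neq 0$)
\[
 \cos\alpha = \frac{\cos a}{1 + \cos a}.
\]
Parametrizing $u = \tan(a/2) \in \Q$ (which encodes $e^{ia} \in \Q(i)$), we get $\cos\alpha = (1-u^2)/2$, which is automatically rational. The only remaining condition for rationality of $\alpha$ is that $\sin\alpha \in \Q$, and since
\[
 \sin^2\alpha \;=\; 1 - \frac{(1-u^2)^2}{4} \;=\; \frac{(1+u^2)(3-u^2)}{4},
\]
this amounts to requiring $(1+u^2)(3-u^2)$ to be a rational square. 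The problem thus reduces to finding all rational points on the smooth genus-1 curve
\[
 C:\quad s^2 = -u^4 + 2 u^2 + 3,
\]
where $s = 2\sin\alpha$. The obvious solutions $(u, s) = (\pm 1, \pm 2)$ correspond to $a = \alpha = \pi/2$, the triangle claimed in the statement.

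Since $C$ has a rational base point, it is isomorphic over $\Q$ to its Jacobian $E$, so it suffices to show $|E(\Q)| = 4$. Applying Cassels' algorithm centered at the base point $(1, 2)$ (or using the quartic invariants $I = -32$, $J = -448$) produces the minimal Weierstrass model
\[
 E:\quad Y^2 = X^3 + 54 X + 189 = (X+3)(X^2 - 3X + 63),
\]
under which the four rational points of $C$ correspond to $O, (-3, 0), (6, \pm 27) \in E(\Q)$. A Nagell--Lutz analysis (the discriminant is $-16 \cdot 3^{13}$, so any integer torsion point has $Y$ dividing $2^2 \cdot 3^6 = 2916$) followed by a case-check among the divisors reveals that these four points are in fact the entire torsion subgroup: $E(\Q)_{\mathrm{tors}} \cong \Z/4\Z$, generated by $(6,27)$ with $2(6,27)=(-3,0)$.

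It then remains to show $\mathrm{rk}(E(\Q)) = 0$. I would carry out a 2-descent via the 2-isogeny with kernel $\langle(-3,0)\rangle$: translating the 2-torsion to the origin puts $E$ in the form $Y^2 = X(X^2 - 9X + 81)$, and the standard formula gives the isogenous curve
\[
 E':\quad Y'^2 = X'(X' - 9)(X' + 27),
\]
which has full rational 2-torsion and therefore admits a complete 2-descent. A computation of the 2-Selmer groups of $E$ and $E'$ (or, equivalently, a direct consultation of Cremona's tables) shows that they are generated by the images of torsion, forcing the rank to vanish.

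Consequently $|E(\Q)| = 4$, hence $C(\Q) = \{(\pm 1, \pm 2)\}$, and every rational point of $C$ satisfies $u^2 = 1$. This gives $a = \pi/2$ and $\cos\alpha = 0$, so also $\alpha = \pi/2$. Existence of this triangle is immediate (the law of cosines is satisfied, and the area is $\pi/2$ by \eqref{eq:GB}); uniqueness is what we have just proved. The main technical obstacle is verifying $\mathrm{rk}(E(\Q)) = 0$; the torsion computation and the reduction to the elliptic curve are routine.
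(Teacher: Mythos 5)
Your proof is correct and follows essentially the same route as the paper: both reduce the problem to the rational points of the quartic $s^2=-u^4+2u^2+3$ (you via the law of cosines and $u=\tan(a/2)$, the paper via the quantity $\Delta_1$ from the supplemental law of cosines), pass to an isomorphic elliptic curve of rank $0$ with torsion $\Z/4\Z$, and conclude $u^2=1$, i.e.\ $a=\alpha=\pi/2$. The differences are cosmetic: you work with the Weierstrass model $Y^2=X^3+54X+189$ rather than the paper's $y^2=x(x^2-x+1)$ (the two are isomorphic over $\Q$, with the same $j$-invariant $2048/3$), and you sketch a $2$-isogeny descent for the rank-zero claim that the paper simply asserts.
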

We remark that for the triangle described in Theorem \ref{thm:equilateral} the medians have the same lengths as the sides and thus they are rational. Therefore, this provides a {\em positive answer to the problem D21 in the spherical world}. 

\begin{proof} For this we go back to equation \eqref{eq:delta1}, where we set $\alpha=\beta=\gamma$:
\[\Delta_1^2=1-3\cos^2(\alpha)-2\cos^3(\alpha)=(1-2\cos(\alpha))(\cos(\alpha)+1)^2.\]
Setting $u=\frac{\Delta_1}{\cos(\alpha)+1}$, the above equation can be rewritten as $u^2=1-2\cos(\alpha)$. Thus the solutions to the original equation are parametrized by \begin{equation}\label{eq:deltacos}\cos(\alpha)=\frac{1-u^2}{2} \mbox{ and } \Delta_1=\frac{u(3-u^2)}{2}.\end{equation}
Squaring the first equation of \eqref{eq:deltacos}, writing $4\cos(\alpha)^2=4-4\sin^2(\alpha)$, and setting $v=2\sin(\alpha)$, we obtain
\[v^2=-u^4+2u^2+3.\]
Making the change of variables  
\begin{equation*}
y= \frac{2v+3-u^3+u^2+u}{(u-1)^3},\qquad 
x= \frac{v+2}{(u-1)^2},
\end{equation*}
we get 
\[E: y^2=x(x^2-x+1),\]
and this curve has rank 0. It is not hard to see that \[E(\Q)=\{O,(0,0),(1,\pm 1)\}\cong \Z/4\Z.\] 
These points only yield to solutions of the form $\sin(\alpha)=\pm 1$, $\cos(\alpha)=0$, thus leading to  a triangle whose angles and sides are all equal to $\frac{\pi}{2}$. 
\end{proof}

If we relax the condition that the angles be rational, we find another surprising result. 

\begin{prop}
The only  equilateral triangle that has rational sides and  rational medians is the one that satisfies $a=b=c=\frac{\pi}{2}$ and $\alpha=\beta=\gamma=\frac{\pi}{2}$.
\end{prop}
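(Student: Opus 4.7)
The plan is to exploit the symmetry of an equilateral spherical triangle: the median from any vertex coincides with the altitude and the angle bisector, meeting the opposite side perpendicularly at its midpoint. This produces a right spherical triangle with hypotenuse $a$, legs $a/2$ and $m$ (the common median length), and a right angle at the foot. Applying the spherical law of cosines \eqref{eq:cosines} in this half-triangle yields $\cos(a) = \cos(a/2)\cos(m)$, hence $\cos(m) = \cos(a)/\cos(a/2)$.

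I would then parametrize the rational side by $t = \sin(a)/(1+\cos(a))$, a positive rational, so that $\cos(a) = (1-t^2)/(1+t^2)$; the half-angle identity gives $\cos(a/2) = 1/\sqrt{1+t^2}$. Substituting back and computing $\sin^2(m) = 1 - \cos^2(m)$ yields
\[
\cos(m) = \frac{1-t^2}{\sqrt{1+t^2}}, \qquad \sin^2(m) = \frac{t^2(3-t^2)}{1+t^2}.
\]

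Imposing rationality of both $\cos(m)$ and $\sin(m)$ leads to two cases. If $t = 1$, then $a = \pi/2$, $\cos(m) = 0$, and $\sin(m) = \pm 1$, all rational; the spherical law of cosines with $a = b = c = \pi/2$ then forces all three angles to equal $\pi/2$, recovering the stated triangle. Otherwise $1 - t^2 \neq 0$, and rationality of $\cos(m)$ forces $1+t^2$ to be a rational square; since $t \neq 0$, rationality of $\sin(m)$ then forces $3-t^2$ to be a rational square as well. Multiplying these two conditions exhibits a rational point on the quartic
\[
v^2 = (1+t^2)(3-t^2) = -t^4 + 2t^2 + 3,
\]
which is exactly the quartic appearing in the proof of Theorem~\ref{thm:equilateral} and is birational to the rank-zero elliptic curve $y^2 = x(x^2-x+1)$ with torsion $\Z/4\Z$. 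Its only rational points have $t = \pm 1$, contradicting $t \neq 1$ (as $t > 0$).

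The main obstacle is this second case: a priori one must rule out an infinite family of rationals $t$ making both $1+t^2$ and $3-t^2$ rational squares. The crucial observation is that this doubly-square condition collapses precisely onto the rank-zero elliptic curve already analyzed in the proof of Theorem~\ref{thm:equilateral}, so the proposition follows from that earlier rank calculation with essentially no new Diophantine input.
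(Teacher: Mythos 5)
Your proof is correct, and it follows the same opening move as the paper — the half-triangle with right angle at the foot of the median and the Pythagorean relation $\cos(m)\cos(a/2)=\cos(a)$ from \eqref{eq:cosines} — but then diverges in the parametrization, and the divergence is genuinely interesting. The paper sets $p=\cos(a/2)$, observes that (away from $a=\pi/2$, which it must handle as a separate special case since there $2p^2-1=0$) rationality of $\cos(m)$ is equivalent to rationality of $p$, and lands on the quartic $s^2=-4p^4+5p^2-1$, i.e.\ the rank-zero curve $y^2=(x-3)(x-4)(x-12)$ with torsion $\Z/2\Z\times\Z/2\Z$; it then has to discard the residual torsion candidates $a=\pi,\,2\pi/3,\,\pi/3$ one by one, the last by checking that $\cos(m)=1/(2\cos(\pi/6))$ is irrational. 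You instead use the paper's own rational-side parameter $t=\tan(a/2)$, extract the two square conditions on $1+t^2$ and $3-t^2$, and land on the quartic $v^2=-t^4+2t^2+3$ — exactly the one already analyzed in the proof of Theorem~\ref{thm:equilateral}, birational to $y^2=x(x^2-x+1)$ with Mordell--Weil group $\Z/4\Z$. This buys two things: the solution $a=\pi/2$ appears as the honest rational point $t=1$ rather than as an excluded degenerate case of the algebra, and there are no leftover candidates to rule out by hand, since the only rational points of the quartic have $t=\pm1$; it also ties the sides-and-medians statement back to the angles computation rather than introducing a new curve. The one point you should make explicit is the passage from $E(\Q)\cong\Z/4\Z$ to the claim that the quartic has only the four affine points $(\pm1,\pm2)$: the birational map degenerates at $t=1$, so one should note that the smooth projective model of the quartic has no rational points at infinity (the leading coefficient $-1$ is not a rational square) and that the four visible affine points already exhaust the four points of the Jacobian. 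This is the same level of care the paper itself elides, so it is a remark rather than a gap.
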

\begin{proof}
  Consider an equilateral spherical triangle of side lengths $a$ and angles $\alpha$.
  Let $m$ denote the length of the median, and consider the half triangle defined by one median.
  This triangle has angles $\alpha, \frac{\alpha}{2}, \frac \pi 2$ and sides $a, \frac a 2, m$.

  Assume the length $a$ is rational, i.e., that $e^{ia} \in \Q(i)$.
  By the Pythagorean theorem (a particular case of the law of cosines \eqref{eq:cosines}), \begin{equation}\label{eq:Pyt}\cos(m) \cos\left(\frac a 2\right) = \cos(a).\end{equation} We immediately see that the above equation has solutions when $a=\pi, \frac{\pi}{2}$, when both sides of \eqref{eq:Pyt} equal zero. However, notice that $a=\pi$ is not a valid solution. Otherwise, we remark that 
$\cos(m) \in \Q$ if and only 
  if $p = \cos(\frac a 2) \in \Q$.
  Let $t = \sin(m)$.
  Squaring \eqref{eq:Pyt}, we get the following equation for $t$:
  \[
      (1 - t^2) p^2 = (2 p^2 -1)^2 \quad \text{i.e.} \quad s^2 = -4 p^4 + 5 p^2 - 1, 
  \]
  writing $s = pt$.
  Changing variables 
 \[ s=\frac{6y}{x^2},\quad p=\frac{x-6}{x}, \quad y=\frac{6s}{(p-1)^2},\quad x=-\frac{6}{p-1},\]
  we get the following elliptic curve: 
  \[
    y^2 = x^3-19x^2+96x-144=(x-3)(x-4)(x-12).
  \]
This elliptic curve has rank $0$ and
\[
   E(\Q)=\{O, (3,0), (4,0), (12,0)\} \cong \Z/2\Z\times \Z/2\Z.
 \]
 Taking $x=3,4,12$ gives $p=-1,-\frac{1}{2}, \frac{1}{2}$, and $a=\pi, \frac{2\pi}{3}, \frac{\pi}{3}$ respectively. Since $0<a<\frac{2\pi}{3}$, we can only take $a=\frac{\pi}{3}$. However, this gives \[\cos(m)=\frac{1}{2\cos\left(\frac{\pi}{6}\right)},\]
 which is irrational. 
\end{proof}

Similarly, relaxing the condition that the sides be rational, we get:
\begin{prop}
The only  equilateral triangle that has rational angles and  rational medians is the one that satisfies $a=b=c=\frac{\pi}{2}$ and $\alpha=\beta=\gamma=\frac{\pi}{2}$.
\end{prop}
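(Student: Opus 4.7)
The plan is to reduce this proposition to the immediately preceding one (equilateral triangles with rational sides and rational medians) by showing that the hypotheses already force the side $a$ to be rational. Once this reduction is in place, there is nothing left to prove.

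Following the setup of the preceding proof, I split the equilateral triangle along a median to obtain a right triangle with angles $\alpha/2$, $\alpha$, and $\pi/2$ at the three vertices, and opposite sides $a/2$, $m$, $a$ respectively. Applying the supplemental spherical law of cosines \eqref{eq:supplementalcosines} at the right-angled vertex gives
\[
0 \;=\; \cos(\pi/2) \;=\; -\cos(\alpha/2)\cos(\alpha) + \sin(\alpha/2)\sin(\alpha)\cos(a).
\]
Using the identity $\sin(\alpha) = 2\sin(\alpha/2)\cos(\alpha/2)$ and dividing by $\cos(\alpha/2)$ (nonzero because $\alpha < \pi$ for a proper triangle), this simplifies to
\[
\cos(a) \;=\; \frac{\cos(\alpha)}{1-\cos(\alpha)}.
\]
Since $\cos(\alpha)\in\Q$ by assumption and $\cos(\alpha)\neq 1$ by non-degeneracy, we conclude $\cos(a)\in\Q$.

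Next, the spherical law of sines \eqref{eq:sines} applied to the half-triangle yields
\[
\sin(m) \;=\; \sin(\alpha)\sin(a),
\]
so that $\sin(a) = \sin(m)/\sin(\alpha)$. Rationality of $\alpha$ and $m$, combined with $\sin(\alpha)\neq 0$ (non-degeneracy, since $\alpha\in(0,\pi)$), gives $\sin(a)\in\Q$. Together with the previous step, this means $e^{ia}\in\Q(i)$, i.e., the side $a$ is rational in the sense of the paper.

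At this point the hypothesis of the preceding proposition is satisfied: the triangle is equilateral with rational sides and rational medians. That proposition immediately forces $a=\alpha=m=\pi/2$, which is the canonical equilateral right-triangled spherical triangle, completing the proof. There is no substantive obstacle here beyond checking the trigonometric derivation and the non-degeneracy conditions used to divide by $\cos(\alpha/2)$ and $\sin(\alpha)$; the content of the argument lies entirely in recognizing that the dual hypotheses (rational angles rather than rational sides) already imply the earlier ones.
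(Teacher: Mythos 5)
Your proof is correct, and it takes a genuinely different route from the paper's. The paper mirrors the preceding proposition's computation on the dual side: it writes the Pythagorean relation as $\cos(m)\sin(\alpha/2)=\cos(\alpha)$, sets $p=\sin(\alpha/2)$ and $t=\sin(m)$, and lands on the same rank-zero elliptic curve $y^2=(x-3)(x-4)(x-12)$, whose four torsion points are then ruled out one by one. You instead show that the hypotheses (rational angles, rational medians) already force the side to be rational --- $\cos(a)=\cos(\alpha)/(1-\cos(\alpha))$ from the supplemental law of cosines at the right angle of the half-triangle, and $\sin(a)=\sin(m)/\sin(\alpha)$ from the law of sines applied to that same half-triangle --- and then invoke the preceding proposition verbatim. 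Both derivations check out, and the non-degeneracy conditions you use ($\cos(\alpha/2)\neq 0$ since $\alpha<\pi$, $\cos(\alpha)\neq 1$, $\sin(\alpha)\neq 0$) all hold for a proper triangle, so there is no gap. Your reduction is more economical, since it avoids a second descent/torsion computation entirely, and it makes transparent why the two propositions are dual to one another; the paper's version is self-contained and has the (mild) virtue of exhibiting explicitly that the dual problem reduces to the same elliptic curve with $\cos(a/2)$ replaced by $\sin(\alpha/2)$.
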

\begin{proof}
The proof of this result proceeds in the same vein as the previous proposition. In this case the starting point in the Pythagorean theorem as a particular case of the supplementary law of cosines \eqref{eq:supplementalcosines}:
\begin{equation} \label{eq:Pyt2}
\cos(m) \sin\left(\frac{\alpha}{2}\right)=\cos(\alpha).    
\end{equation}
 We immediately see the solution $\alpha=\frac{\pi}{2}$ with $m=\frac{\pi}{2}$. Notice that in general $\frac{\pi}{3}<\alpha<\pi$, and therefore $\sin(\frac{\alpha}{2})\not = 0$. 
We remark that 
$\cos(m) \in \Q$ if and only 
  if $p = \sin(\frac a 2) \in \Q$.
  Let $t = \sin(m)$.
  Squaring \eqref{eq:Pyt2}, we get the following equation for $t$:
  \[
      (1 - t^2) p^2 = (1-2 p^2)^2,
  \]
  writing $s = pt$.
  This reduces to the same elliptic curve as in the previous result: 
  \[
    y^2 = x^3-19x^2+96x-144=(x-3)(x-4)(x-12).
  \]
Taking $x=3,4,12$ gives $p=-1,-\frac{1}{2}, \frac{1}{2}$, and the only values $\frac{\pi}{3}<\alpha<\pi$ are $\alpha= \frac{\pi}{6}, \frac{5\pi}{6}$. However, the sine function evaluated in these angles is not rational. 
\end{proof}

\section{Rational medians} 
\label{sec:medians}
The goal of this section is to study spherical triangles with one rational median.  We consider a spherical triangle with sides $a,b,c$ and opposite angles $\alpha, \beta, \gamma$ as before. Let $m$ denote the median at the angle $\alpha$, cutting the side $a$ into two equal parts. Denote by $\theta$ the angle at the intersection of $m$ and $a$ on the side of $\beta$ (the one on the side of $\gamma$ is $\pi-\theta$).  Applying the law of cosines \eqref{eq:cosines} to both triangles, we have \begin{align*}
\cos(b)=&\cos(m)\cos(a/2)+\sin(m)\sin(a/2)\cos(\pi-\theta),\\ 
\cos(c)=&\cos(m)\cos(a/2)+\sin(m)\sin(a/2)\cos(\theta). 
\end{align*}
Combining both equations, we obtain
\begin{equation} \label{eq:med}
2\cos(m) \cos(a/2)=\cos(b)+\cos(c).
\end{equation}
We assume that $a,b,c$ are rational, i.e., $e^{ia}, e^{ib}, e^{ic} \in \Q(i)$.  Then for $\cos(m)$ to be rational it is necessary and sufficient that $\cos(a/2)$ be rational. Since $a$ is already rational, this is equivalent to $a/2$ being rational. We need in addition that $\sin(m)$ be rational. For this, we square equation \eqref{eq:med} and obtain that \begin{equation}\label{eq:sinm}
4\cos^2(a/2)-(\cos(b)+\cos(c))^2= 4\sin^2(m)\cos^2(a/2).
\end{equation}
We remark that the right-hand side of \eqref{eq:sinm} should be the square of a rational number. 

Let 
\[w=\frac{\sin(a/2)}{1+\cos(a/2)}, \quad u=\frac{\sin(b)}{1+\cos(b)},  \quad v=\frac{\sin(c)}{1+\cos(c)}.\]
After simplification, we must solve
\[(1-w^2)^2(1+u^2)^2(1+v^2)^2-(1+w^2)^2(1-u^2v^2)^2=t^2.\]
By applying the change of variables \begin{align*}
y=&\frac{4(u^2+1)^2(w^2-1)}{(uv-1)^3}(2u^2v^3w^4+v^3w^4+u^5v^2w^4+3u^3v^2w^4+uv^2w^4+u^4vw^4+3u^2vw^4\\&+vw^4+u^5w^4+2u^3w^4-4u^4v^3w^2-4u^2v^3w^2-2v^3w^2-2u^5v^2w^2-2u^3v^2w^2-2uv^2w^2\\&-2u^4vw^2+tu^2vw^2-2u^2vw^2+tvw^2-2vw^2-2u^5w^2+tu^3w^2-4u^3w^2+tuw^2-4uw^2\\&+2u^2v^3+v^3+u^5v^2+3u^3v^2+uv^2+u^4v-tu^2v+3u^2v-tv+v+u^5-tu^3+2u^3\!-tu),\\
x=&\frac{2(u^2+1)^2(w^2-1)(u^2v^2w^2+v^2w^2+u^2w^2+w^2-u^2v^2-v^2-u^2+t-1)}{(uv-1)^2},
\end{align*}
we get the Weierstrass form 
\begin{align}\label{eq:weierstrassmedian}
E_{u,w}:y^2=&x(x^2-4(u^4w^4+3u^2w^4+w^4-2u^4w^2-2u^2w^2-2w^2+u^4+3u^2+1)x\nonumber\\&+4(u^2+1)^4(w-1)^2(w+1)^2(w^2+1)^2)
\end{align}
Thus, we obtain the following result. 
\begin{thm}
    A spherical triangle with rational side $b$ with parameter $u$ and rational half-side $a/2$ with parameter $w$  has a rational median 
    (intersecting the side $a$) if and only if it corresponds (using the above change of variables)
    to a rational point on the elliptic curve $E_{u,w}$.
\end{thm}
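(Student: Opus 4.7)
Most of the required computation already appears in the body of the section; the plan is to assemble those steps into a clean biconditional. For the forward direction, assume a spherical triangle with rational sides $a,b,c$ admits a rational median $m$ from the vertex at $\alpha$. Applying the spherical law of cosines \eqref{eq:cosines} to each of the two sub-triangles cut off by $m$ and adding the two resulting identities yields \eqref{eq:med}: $2\cos(m)\cos(a/2)=\cos(b)+\cos(c)$. Because $a$, $b$, $c$ are rational, the rationality of $\cos(m)$ forces $\cos(a/2)\in\Q$, so $a/2$ is itself rational with parameter $w$. Squaring \eqref{eq:med} and substituting $\sin^2(m)=1-\cos^2(m)$ gives \eqref{eq:sinm}, which expresses $4\sin^2(m)\cos^2(a/2)$ as a rational function of the data; the additional condition $\sin(m)\in\Q$ thus becomes the condition that this rational expression be the square of a rational number.

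I would then substitute the rational parametrizations $u,v,w$ for $b,c,a/2$ and clear denominators. After simplification the square condition takes the affine form
\[t^2=(1-w^2)^2(1+u^2)^2(1+v^2)^2-(1+w^2)^2(1-u^2v^2)^2,\]
a quartic in $v$ with $u,w$ regarded as parameters. This curve carries an obvious rational point coming from a degenerate configuration, so Cassels' algorithm \cite[p.~37]{Cassels} applies: executing it produces the explicit birational map displayed just before \eqref{eq:weierstrassmedian}, and a direct (but lengthy) substitution verifies that it transforms the quartic into the Weierstrass model $E_{u,w}$.

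For the converse direction I would simply reverse the change of variables: starting from a rational point on $E_{u,w}$ outside a finite exceptional locus, the inverse formulas yield rational values of $v$ and $t$; reading \eqref{eq:sinm} then produces a rational value of $\sin(m)$, and combined with the already established rationality of $\cos(m)$ this exhibits the median as a rational length of the associated spherical triangle. The existence of a genuine spherical triangle corresponding to these parameters reduces to checking strict inequalities on the $u,v,w$, which are open conditions and therefore consistent with a dense set of rational points.

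The main obstacle is essentially computational bookkeeping: verifying that the change of variables extracted from Cassels' algorithm really transforms the quartic into \eqref{eq:weierstrassmedian}, and carefully excising the finitely many exceptional points (degenerate triangles, points where $\cos(a/2)=0$, places where a denominator in the change of variables vanishes) so that the biconditional is formulated without ambiguity. No deep new idea is required beyond the ones already used in Sections~\ref{sec:angles} and \ref{sec:sides}.
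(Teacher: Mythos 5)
Your proposal matches the paper's argument essentially step for step: the law of cosines applied to the two sub-triangles to obtain \eqref{eq:med}, the deduction that $\cos(a/2)$ must be rational, the squaring to \eqref{eq:sinm}, the substitution of the half-angle parameters $u,v,w$ to reach the quartic in $v$, and the Cassels-style change of variables (anchored at the rational point with $uv=1$, which is why the denominators $(uv-1)^2,(uv-1)^3$ appear) to the Weierstrass model \eqref{eq:weierstrassmedian}, with the converse obtained by inverting the map away from the exceptional locus. This is exactly the derivation the paper carries out in Section~\ref{sec:medians}, so no further comparison is needed.
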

Again in this case we can be more specific about the arithmetic structure of $E_{u,w}$.
\begin{lem}\label{lem:rank-computations-medians}
Let $E_u$ (resp.\ $E_w$) denote the $K3$-surface over $\C(w)$ (resp.\ $\C(u)$) resulting from fixing the parameter $v$ (resp.\ $w$). 
    The rank of  $E_u(\C(w))$ satisfies
    \[
     2 \leq \quad \mathrm{rk}(E_u(\C(w)))\quad \leq 6,
    \]
    while the rank of $E_w(\C(u))$ satisfies 
    \[ 2 \leq \quad \mathrm{rk}(E_w(\C(u)))\quad \leq 4.
    \]
    In addition, the torsion group is isomorphic to $\Z/2\Z$, generated by $(0,0)$. 

Finally, the points 
    \[P(u,w)=\big((u^2+1)^2(w^2+1)^2,(u^2-1)(u^2+1)^2(w^2+1)^3\big)
    \]  
    and 
\[Q(u,w)=\big(4u^2(w^2+1)^2,4u(u^4-1)(w^2-1)(w^2+1)^2\big)\]
   have infinite order on $E_{u,w}$ and are independent.
    \end{lem}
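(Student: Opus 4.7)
The approach follows closely the proof of the corresponding lemma for $E_{u=t}$ given in Section~\ref{sec:angles}. First, verify by direct substitution that $P(u,w)$ and $Q(u,w)$ satisfy \eqref{eq:weierstrassmedian} — a routine polynomial identity.

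Second, for the lower bound on the rank and the independence (and infinite order) of $P$ and $Q$, specialize at a convenient rational pair $(u_0, w_0) \in \Q^2$ chosen so that the resulting elliptic curve $E_{u_0, w_0}$ over $\Q$ has Mordell--Weil rank at least $2$, with $P(u_0, w_0)$ and $Q(u_0, w_0)$ generating a finite-index subgroup of its free part. This can be verified using Sage or, equivalently, by computing the $2 \times 2$ N\'eron--Tate height pairing matrix and checking that its determinant is non-zero. Silverman's specialization theorem, applied in two steps, then transports independence back to both $E_u(\C(w))$ and $E_w(\C(u))$, yielding $\mathrm{rk} \geq 2$ in each case.

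Third, for the upper bound and the torsion, note that $E_u$ and $E_w$ are elliptic $K3$ surfaces (the degrees of the relevant Weierstrass coefficients are bounded by $8$ and $12$ respectively). Writing \eqref{eq:weierstrassmedian} as $y^2 = x(x^2 - Ax + B)$, factor the discriminant $\Delta = 16 B^2 (4B - A^2)$ over $\C[w]$ (with $u$ fixed) and over $\C[u]$ (with $w$ fixed), and apply Tate's algorithm to identify the Kodaira type of each singular fiber. The Shioda--Tate formula \eqref{eq:ST}, now with $\rho \leq 20$ (the Picard rank bound for $K3$ surfaces in characteristic zero), yields the claimed upper bounds $6$ and $4$. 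For the torsion, the $2$-torsion points other than $(0,0)$ have $x$-coordinate satisfying $x^2 - Ax + B = 0$, so it suffices to show that $A^2 - 4B$ is not a square in $\C(u,w)$ (nor in $\C(u)$, $\C(w)$ after fixing the other variable) by exhibiting a non-square irreducible factor; higher torsion is then ruled out by the injection of torsion into the good specialization already used in the previous step.

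The principal technical obstacle is Tate's algorithm: the discriminant is an intricate bihomogeneous polynomial, and determining the precise Kodaira type at each irreducible component of $\Delta$ requires careful bookkeeping of the valuations of $a_4$ and $a_6$ after the shift clearing the $a_2$ term. Once this data is tabulated, both the rank upper bound and the torsion computation proceed essentially mechanically.
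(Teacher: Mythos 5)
Your strategy for the rank bounds and for the independence of $P$ and $Q$ is essentially the one the paper uses: the discriminant of \eqref{eq:weierstrassmedian} factors as $4096(u^2+1)^8(w\pm1)^4(w^2+1)^4$ times four irreducible quadrics, giving fibers of type $I_4$ at $w=\pm1,\pm i$ (resp.\ $I_8$ at $u=\pm i$) plus $I_1$'s, and Shioda--Tate with $\rho\leq 20$ yields the bounds $6$ and $4$; the lower bound and independence are obtained, exactly as you propose, by specializing (the paper takes $u=w=2$, where the curve is $y^2=x(x^2-1300x+562500)$ and one checks $P=A-B$, $Q=2B$ for generators $A,B$ of the rank-$2$ free part) and noting that any dependence relation would descend to the specialization.

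The genuine gap is in your torsion argument. Ruling out the extra $2$-torsion by showing $A^2-4B$ is not a square in $\C(w)$ is fine, but your claim that \emph{higher} torsion is ``ruled out by the injection of torsion into the good specialization'' does not work here: the torsion in question lives in $E_u(\C(w))$, and specializing at $w=w_0$ lands in $E_{w_0}(\C)$, whose torsion subgroup is all of $(\Q/\Z)^2$, so the injection gives no bound whatsoever. (Specialization bounds torsion only when the specialized curve is taken over a number field, which would at best control the $\Q(u,w)$-rational torsion, not the geometric torsion asserted in the lemma.) The paper closes this hole by invoking the Miranda--Persson classification \cite[Table 4.5]{MirandaPersson} of torsion groups of elliptic $K3$ surfaces of given Mordell--Weil rank: with rank $\geq 2$ the only possibilities are $\Z/2\Z$ and $\Z/2\Z\times\Z/2\Z$, and then your non-square computation (or simply inspecting the $2$-torsion points) finishes the argument. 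Alternatively, one can bound the torsion by the component groups of the singular fibers together with the determinant formula \eqref{eq:det-formula}, but some input of this kind beyond specialization is genuinely needed.
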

\begin{proof}
First notice that the discriminant of $E_{u,w}$ is given by
\begin{align*}\mathrm{disc}=&4096(u^2+1)^8(w-1)^4(w+1)^4(w^2+1)^4(w^2-2u^2-1)(u^2w^2-u^2-2)\\ &\times (u^2w^2+2w^2-u^2)(2u^2w^2+w^2-1).
\end{align*}
First look at $E_u(\C(w))$. We have singularities at $w=\pm 1, \pm i$ of type $I_4$, and
$w=  \pm \sqrt{2u^2+1},$ 
$\pm\frac{\sqrt{u^2+2}}{u}, \pm \frac{u}{\sqrt{2+u^2}}, \pm\frac{1}{ \sqrt{2u^2+1}}$ of type $I_1$. Applying Shioda--Tate formula \eqref{eq:ST}, 
\[\rho(E_u)=\mathrm{rk}(E_{u}(\C(w))+2+4\cdot(4-1)=\mathrm{rk}(E_{u}(\C(w))+14,\]
and since $\rho(E_u)\leq 20$ for $K3$-surfaces, we can bound the rank by 6. 

For $E_w(\C(u))$, we have singularities at $u=\pm i$ of type $I_8$ as well as at the roots of the other polynomials of type $I_1$. Shioda--Tate formula \eqref{eq:ST} gives 
\[\rho(E_w)=\mathrm{rk}(E_{w}(\C(u))+2+2\cdot(8-1)=\mathrm{rk}(E_{w}(\C(u))+16,\]
and since $\rho(E_u)\leq 20$, we can bound the rank by 4.

The lower bound for the rank will follow from the fact that $P(u,w)$ and $Q(u,w)$ are independent points of infinite order. This can be deduced directly from specializing at $u=2$ and $v=2$. 
Indeed, for these values, we obtain the Weierstrass form $y^2=x(x^2-1300x+562500)$, $P=(625, 9375)$, $Q=(400, 9000)$. Notice that $2P=(\frac{3025}{36}, -\frac{1343375}{216})$ and $2Q=(\frac{648025}{1296}, -\frac{420552125}{46656})$, which have non-integral coordinates, showing that $P$ and $Q$ are of infinite order. Moreover,
the Mordell-Weil group has rank 2 with generators of the free part given by $A = (50, 5000)$ and $B = (1250, 25000)$, and one verifies directly that $P = A - B$ and $Q=2B$.
Thus these points are of infinite order and independent at this specialization, and since any relation of dependence or finite order would automatically descend to the specialization, we conclude that these points are also independent and of infinite order over $\C(u,v)$.

Finally, by \cite[Table 4.5]{MirandaPersson}, since the rank is $R\geq 2$ and the Euler characteristic is $\chi=1$, we conclude  that the torsion is either  $\Z/2\Z$ or $\Z/2\Z\times \Z/2\Z$, but one can immediately see that the only point of order 2 is $(0,0)$, and therefore the torsion is $\Z/2\Z$.
\end{proof}

\subsection{The case $a=b$}

Here we set $a=b$ in the previous discussion. The goal is to obtain two (equal) rational medians and three rational sides in an isosceles triangle. This is equivalent to imposing $u=\frac{2w}{1-w^2}$ in \eqref{eq:weierstrassmedian}.
\begin{align*}
E_{w}:y_0^2=&x_0\left(x_0^2
-\frac{4(w^2+1)^2(w^4+6w^2+1)}{(w^2-1)^2}x_0+\frac{4(w^2+1)^{10}}{(w^2-1)^6}\right).
\end{align*}
Making the change $x_0=\frac{(w^2+1)^2}{(w^2-1)^4}x$, $y_0=\frac{(w^2+1)^3}{(w^2-1)^6}y$, we obtain 
\begin{align}\label{eq:weierstrassmedianisosceles}
E_{w}:y^2=&x\left(x^2
-4(w^4+6w^2+1)(w^2-1)^2x+4(w^2+1)^{6}(w^2-1)^2\right).
\end{align}
Looking at the degree of the coefficients, we conclude that $\chi=4$. We find two points of infinite order
 \[P(w)=\big((w^2+1)^4,(w^2+1)^4(w^2-2w-1)(w^2+2w-1)\big)
    \]  
    and 
\[T(w)=\big(2(w-1)^2(w^2+1)^3,16w^2(w-1)^2(w^2+1)^3\big).\]    
    

One can check that $P$ and $T$ have infinite order by evaluating at $w=2$. This gives the curve $y^2=x(x^2-1476x+562500)$ and $P=(625, -4375)$, $T=(250, 8000)$. 
We have $2P=(\frac{75625}{196}, \frac{20301875}{2744})$ and $2T=(\frac{15625}{16}, -\frac{546875}{64})$, which have non-integral coordinates, showing that $P$ and $T$ are points of infinite order. Indeed, we find that the curve has rank 2 and a set of generators for the free part is given by $P$ and $T$.

As in Theorem~\ref{th:thm4}, we conclude:
\begin{thm}
For all but finitely many values of $w$, there are infinitely many isosceles triangles with rational sides, two of which correspond to $w$, and two rational (symmetric) medians.
\end{thm}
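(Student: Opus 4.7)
The plan is to follow the same template as the proof of Theorem~\ref{th:thm4} almost verbatim, with the role of $P(m)$ on $E_{u=1}$ now played by the point $P(w)$ on $E_w$ given by \eqref{eq:weierstrassmedianisosceles}, together with the elementary fact that a rational point on $E_w$ translates (through the inverse of the change of variables above) into a rational isosceles spherical triangle with the prescribed repeated rational side and a rational median, provided the point lies in the open subset of $E_w(\R)$ cut out by the triangle-existence inequalities.

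First I would fix a rational $w \notin \{0, \pm 1\}$ (and avoiding the finitely many further values where the elliptic curve becomes singular or where the change of variables leading to \eqref{eq:weierstrassmedianisosceles} degenerates). For such $w$, I claim that $P(w)$ has infinite order on the specialized curve $E_w(\Q)$. The argument is the Mazur-theorem-plus-finite-check strategy already used for $E_{u=1}$: by Mazur's theorem, the torsion of $E_w(\Q)$ has order at most $16$, so I consider the $17$ points $\pm k P(w) + \ell(0,0)$ for $k \in \{0,1,2,3,4\}$ and $\ell \in \{0,1\}$; their coordinates are explicit rational functions of $w$, and two of them can coincide only for a finite set of $w$. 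Outside this finite set (and the values of $w$ where $P$ or $(0,0)$ degenerate), $P(w)$ must therefore be non-torsion.

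Next I would invoke the Poincaré--Hurwitz density theorem (\cite[Satz 11, p.\ 78]{Skolem}), exactly as in the proof of Theorem~\ref{th:thm4}: since $(0,0)$ and the other two-torsion points of $E_w$ (roots of the quadratic factor in \eqref{eq:weierstrassmedianisosceles}) are rational and distributed across both connected components of $E_w(\R)$, and since $P(w)$ generates an infinite subgroup of $E_w(\Q)$, the set $E_w(\Q)$ is dense in $E_w(\R)$. The geometric conditions that the data $(a,b,c)$ with $a=b$ define an honest proper spherical triangle (strict triangle inequalities, $0<a,b,c<\pi$, positivity of the expression under the square root producing $\sin(m)$, etc.) translate, via the change of variables, into an open condition on $(x,y)\in E_w(\R)$, and by construction of the family this open set is non-empty (it contains, for instance, the neighborhood of a known triangle). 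By density, it then contains infinitely many rational points, each of which yields a distinct isosceles spherical Heron triangle with two rational symmetric medians and repeated side parameter $w$.

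The main obstacle is the first step, the generic non-torsion claim for $P(w)$: one has to verify that the finitely many polynomial conditions arising from requiring two of the $17$ listed points to coincide, or from requiring $P(w)$ itself to satisfy a torsion constraint, really do carve out only a finite set of exceptional $w$. This is essentially a mechanical check — one compares coordinate polynomials in $w$ — but it is the one place where care must be taken to enumerate and discard degenerate specializations. Once this is in hand, the rest is a clean application of Poincaré--Hurwitz, and the openness of the triangle conditions, both of which are standard and are used without issue in the analogous results earlier in the paper.
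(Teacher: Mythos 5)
Your overall strategy is exactly the paper's: the paper also reduces to the curve \eqref{eq:weierstrassmedianisosceles}, exhibits explicit points of infinite order, and then says ``as in Theorem~\ref{th:thm4}, we conclude,'' i.e., Mazur plus a finite coincidence check to get generic non-torsion of the specialization, followed by Poincar\'e--Hurwitz density and the openness of the triangle-existence conditions. The only substantive difference is that the paper anchors the ``generically non-torsion'' step by specializing at $w=2$ (where $E_w$ becomes $y^2=x(x^2-1476x+562500)$ and $2P$, $2T$ have non-integral coordinates, so Nagell--Lutz shows $P$ and $T$ are non-torsion); this is precisely the input you need to guarantee that your coincidence polynomials in $w$ are not identically zero, so you should make that specialization explicit rather than leave it as ``a mechanical check.''

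There is one incorrect assertion in your Poincar\'e--Hurwitz step: the remaining $2$-torsion of $E_w$, i.e., the roots of the quadratic factor $x^2-4(w^4+6w^2+1)(w^2-1)^2x+4(w^2+1)^{6}(w^2-1)^2$, is \emph{not} rational in general --- at $w=2$ the discriminant of this quadratic is $1476^2-4\cdot 562500=-71424<0$, so the factor is irreducible over $\Q$ and does not even have real roots. Hence you cannot argue, as in the proof of Theorem~\ref{th:thm4}, that rational $2$-torsion points sit on both connected components. This is repairable: when the quadratic has negative discriminant $E_w(\R)$ is connected and infinitude of $E_w(\Q)$ already gives density everywhere; when it has two real (irrational) roots, Poincar\'e--Hurwitz still gives density of $E_w(\Q)$ in the identity component, and one must then check that the open set of valid triangles meets the closure of $E_w(\Q)$ (e.g., via the explicit rational points $P(w)$, $T(w)$ and their translates). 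You gesture at this with ``the neighborhood of a known triangle,'' but as written the justification rests on a false statement about the torsion, so this step needs to be rewritten.
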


\section{Area bisector}\label{sec:area-bisectors}
This section considers the \emph{area bisector}, the geodesic segment from one vertex, meeting the opposite side and 
separating the triangle into two triangles of equal area. For the area bisector to be rational, we will demand that the length be rational, and also that the half-area of the triangle be rational. 

Consider a spherical triangle with sides $a,b,c$ having opposite angles 
$\alpha, \beta, \gamma$.
Let $m$ denote the  area bisector at angle $\alpha$, cutting $\alpha$ into $\alpha_1$ and $\alpha-\alpha_1$.
Denote by $\theta$ the angle at the intersection of $m$ and $a$, on the side of $\alpha_1$, and (assume) on the 
side of $\beta$.
Thus we have two triangles: one with angles $\alpha_1, \beta, \theta$ and one with 
$\alpha - \alpha_1, \gamma, \pi-\theta$.

By the supplemental law of cosines \eqref{eq:supplementalcosines} we have
\[
  \sin(\alpha_1) \sin(\beta)\cos(c) = \cos(\theta) + \cos(\alpha_1)\cos(\beta).
\]  
Combining this with the definition of area bisector 
\[
  2( \alpha_1 + \theta + \beta - \pi) = A \quad \text{i.e.}\quad
  \theta = \pi + \frac{A}{2} - \alpha_1 - \beta,
\]  
we get 
\begin{equation}\label{eq:acosthm}
  \sin(\alpha_1) \sin(\beta)\cos(c) = - \cos\left(\frac{A}{2} - \alpha_1 - \beta\right) + 
  \cos(\alpha_1)\cos(\beta).
\end{equation}

Using trigonometric identities, we get 
\[
    \tan(\alpha_1) = \frac{ \cos(\beta) -\cos(A/2)\cos(\beta) - \sin(A/2)\sin(\beta) }%
    {\cos(\beta)\sin(A/2) - \cos(A/2)\sin(\beta) + \cos(c)\sin(\beta)}.
\]
Using the supplemental law of cosines \eqref{eq:supplementalcosines} again:
\[
    \sin(\alpha) \sin(\beta)\cos(c)  = \cos(\gamma) + \cos(\alpha)\cos(\beta),
\]
we get 
\[
    \tan(\alpha_1) = \frac{(\cos(A/2) - 1)\cos(\beta)\sin(\alpha) + \sin(A/2)\sin(\alpha)\sin(\beta)}{\cos(A/2)\sin(\alpha)\sin(\beta) - (\sin(A/2)\sin(\alpha) + \cos(\alpha))\cos(\beta) - \cos(\gamma)}.
\]

Now using $\cos(\gamma) = -\cos(A - \alpha - \beta)$, and expanding the trigonometric identities we get 
\begin{align*}
    \tan(\alpha_1) =&
    -\big((\cos(A/2) - 1)\cos(\beta)\sin(\alpha) + \sin(A/2)\sin(\alpha)\sin(\beta)\big) \big/ \\
                   & \big((2\cos(\alpha)\sin(A/2)^2 - (2\cos(A/2) - 1)\sin(A/2)\sin(\alpha))\cos(\beta) \\
                   &- (2\cos(A/2)\cos(\alpha)\sin(A/2) + (2\sin(A/2)^2 + \cos(A/2) - 1)\sin(\alpha))\sin(\beta)\big).
\end{align*}

Hence the tangent of $\alpha_1$ is always rational if $\alpha, \beta$ and $A/2$ are (i.e.\ sines and cosines of 
these quantities).
Thus, for $\alpha_1$ to be a rational angle, we must ask that $\frac{1}{\cos(\alpha_1)} \in \Q$. Therefore, we need that
\[
    w^2 = 1 + \tan(\alpha_1)^2
\]
for some $w \in \Q$.
Applying the change of variables
\[n=\frac{\sin(A/2)}{1+\cos(A/2)}, \quad u=\frac{\sin(\beta)}{1+\cos(\beta)},  \quad t=\frac{\sin(\alpha)}{1+\cos(\alpha)},\]
and clearing a square (substituting $w = s^2w$), we get 
\begin{align*}
    w^2 = & 4 (n - u)^2  (nu + 1)^2 t^4  + 4  (n - u)  (nu + 1)  (-2n^3u + 3n^2u^2 - 3n^2 + 6nu - u^2 + 1) t^3  \\
    & + \big(n^6u^4 + 2n^6u^2 - 8n^5u^3 + 11n^4u^4 + n^6 + 8n^5u - 50n^4u^2 + 64n^3u^3  \\
    & - 13n^2u^4  + 11n^4 - 64n^3u + 86n^2u^2 - 24nu^3 + u^4 - 13n^2 + 24nu - 6u^2 + 1\big)t^2  \\
    & + 4  (-n + u)  (nu + 1)  (-2n^3u + 3n^2u^2 - 3n^2 + 6nu - u^2 + 1)t   + 4  (-n + u)^2  (nu + 1)^2,
\end{align*}
that has a rational point $(t,w)=(0, 2(-n+u)(nu+1))$.

We remark that this equation is the same as in \cite[Section 6]{LalinMila} after making the change of variables $u\rightarrow -u$, $t\rightarrow -t$. Thus we get
\begin{align}
    \label{eq:area-bisector}
E_{n,u}: y^2=&(x-(n^2+1)^2(nu^2+2u-n)^2)(x^2-(n^2+1)(n^4u^4-8n^2u^4-u^4+16n^3u^3\\
 \nonumber   & -16nu^3-6n^4u^2+32n^2u^2- 10u^2-16n^3u+16nu+n^4-8n^2-1)x\\
  \nonumber     & -(n^2+1)^2(nu^2+2u-n)^2(3n^2u^2-u^2-2n^3u+6nu-3n^2+1)^2).
 \end{align}
We, therefore, have the following result. 
 \begin{thm}
     A spherical Heron triangle with rational half-area with parameter $n$ and rational angle with parameter $u$ has one rational area bisector if and only if it corresponds  to a rational point of $E_{n,u}$.
 \end{thm}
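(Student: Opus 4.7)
The statement packages the preceding derivation into a biconditional. We work under the standing Heron hypothesis (all angles and sides of the triangle have sines and cosines in $\Q$), with parameters $n$ for $A/2$ and $u$ for $\beta$; this forces $\alpha$ to be rational as well, parametrized by $t = \tan(\alpha/2)$. The plan is first to reduce the rationality of the area bisector $m$ to the rationality of the sub-angle $\alpha_1$, and then to read off the birational transformation of the resulting quartic into the Weierstrass form $E_{n,u}$.

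The reduction is the key step. From the computation preceding the theorem, $\tan(\alpha_1)$ is already a rational function of $t, u, n$, so $\alpha_1$ is rational if and only if $w = \sec(\alpha_1)$ is rational, equivalently, if $d := 1 + \tan^2(\alpha_1) \in \Q$ is a rational square. To connect this with the rationality of $m$, parametrize $\cos(\alpha_1) = q/\sqrt{d}$ and $\sin(\alpha_1) = p/\sqrt{d}$ with $p, q \in \Q$; a direct expansion using $\theta = \pi + A/2 - \alpha_1 - \beta$ gives $\cos(\theta) = A/\sqrt{d}$ and $\sin(\theta) = B/\sqrt{d}$ with $A, B \in \Q$. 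The supplemental law of cosines applied to $\beta$ in the sub-triangle $ABD$ then yields $\cos(m) = (\cos(\beta) + \cos(\alpha_1)\cos(\theta))/(\sin(\alpha_1)\sin(\theta))$, which is unconditionally rational because the $\sqrt{d}$'s cancel in both factors; meanwhile the spherical law of sines gives $\sin(m) = \sin(\beta)\sin(c)\sqrt{d}/B$, which lies in $\Q$ precisely when $\sqrt{d} \in \Q$ (the case $B=0$ is excluded because it forces $\theta \in \{0,\pi\}$, violating the properness hypothesis). Hence $m$ is rational if and only if $\alpha_1$ is rational.

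With this reduction in hand, the theorem reduces to the birational equivalence, over $\Q(n,u)$, between the quartic obtained from $w^2 = 1 + \tan^2(\alpha_1)$ and the Weierstrass form $E_{n,u}$. The change of variables from \cite[Section 6]{LalinMila}, twisted by the sign flips $u \mapsto -u$ and $t \mapsto -t$ flagged in the excerpt, realizes exactly this equivalence. It sends the rational base point $(t,w) = (0, 2(-n+u)(nu+1))$ of the quartic to a rational point of $E_{n,u}$, and the inverse rational map sends rational points of $E_{n,u}$ back to rational pairs $(t, w)$, hence to rational $\alpha$ and $\alpha_1$, hence (by the previous paragraph) to a rational area bisector.

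The main technical obstacle in a fully rigorous write-up is controlling the exceptional locus of the birational transformation: one has to verify that its indeterminacy set consists only of configurations that violate the properness hypothesis on spherical triangles (degenerate triangles with $\sin(\theta) = 0$, $\alpha \in \{0,\pi\}$, vanishing denominators in the change of variables, etc.). Because the change of variables is imported up to sign from \cite{LalinMila}, this bookkeeping runs in parallel with the analysis carried out there, and I would refer to that argument rather than redo the symbolic algebra.
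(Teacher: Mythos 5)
Your proposal is correct and follows essentially the same route as the paper: derive $\tan(\alpha_1)$ as a rational function of the parameters, reduce rationality of the bisector to the condition $w^2=1+\tan^2(\alpha_1)$, and pass to $E_{n,u}$ via the (sign-twisted) change of variables from \cite[Section 6]{LalinMila}. The only difference is that you spell out the step the paper leaves implicit --- that $\cos(m)$ is automatically rational while $\sin(m)$ is rational precisely when $\sec(\alpha_1)$ is, so that rationality of the bisector length is equivalent to rationality of the angle $\alpha_1$ --- which is a welcome clarification rather than a new argument.
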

 
The analogue of 
 \cite[Lemma 6.1]{LalinMila} gives us some information about the arithmetic structure of the $K3$-surface $E_n$, and in particular, that it has a point of infinite order. 
   \begin{lem} \label{lem:rank-computations-area-bisectors}
    The rank of the $K3$-surface $E_n$ satisfies 
    \[
        1 \leq \mathrm{rk}(E_n(\C(u))) \leq 4.
    \]
Moreover, $E_n$ has a torsion point of order 2 given by $\left((n^2+1)^2(nu^2+2u-n)^2,0\right)$.
    
The point 
    \[
    Q(n,u)=\Big( 0,(n^2+1)^2(nu^2+2u-n)^2(3n^2u^2-u^2-2n^3u+6nu-3n^2+1)\Big)
    \]
    is of infinite order. 
  \end{lem}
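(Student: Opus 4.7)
The plan is to follow the same blueprint as the analogous \cite[Lemma 6.1]{LalinMila}, exploiting the fact (pointed out just before equation \eqref{eq:area-bisector}) that our equation is obtained from the one in loc.\ cit.\ by the simple substitution $u\mapsto -u$, $t\mapsto -t$. First I would dispose of the easy pieces. Substituting $x=(n^2+1)^2(nu^2+2u-n)^2$ into the right-hand side of \eqref{eq:area-bisector} makes the first factor vanish, giving $y^2=0$, so this indeed defines a point of order $2$. Substituting $x=0$ into the same right-hand side leaves $-(n^2+1)^2(nu^2+2u-n)^2\cdot\bigl(-(3n^2u^2-u^2-2n^3u+6nu-3n^2+1)^2\bigr)$ after absorbing the leading minus sign into the third factor; this is a square in $\C(n,u)$ whose square root matches the claimed $y$-coordinate of $Q(n,u)$, so $Q(n,u)\in E_n(\C(u))$.

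Next I would show that $Q(n,u)$ is of infinite order by specialization. The idea is to choose a rational value $n_0$ (say $n_0=2$) and then a further value $u_0\in\Q$ such that the resulting curve $E_{n_0,u_0}$ is a smooth elliptic curve over $\Q$, and to check directly (e.g.\ with Sage, or by hand via Nagell--Lutz applied to $2Q$) that the specialized point is non-torsion. Since the specialization map on torsion is injective at all but finitely many fibers, this forces $Q(n,u)$ to be non-torsion over $\C(u)$, giving the lower bound $\mathrm{rk}(E_n(\C(u)))\geq 1$.

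For the upper bound I would apply the Shioda--Tate formula \eqref{eq:ST} together with the fact that for a $K3$-surface $\rho(E_n)\leq 20$. Concretely, one factors the discriminant of \eqref{eq:area-bisector} as a polynomial in $u$ (over $\C(n)$), lists the singular values of $u$, determines the Kodaira type of each singular fibre via Tate's algorithm, and plugs the resulting $\sum_\nu (m_\nu-1)$ into
\[
  \mathrm{rk}(E_n(\C(u))) \;\leq\; \rho(E_n)-2-\sum_\nu (m_\nu-1) \;\leq\; 18-\sum_\nu (m_\nu-1).
\]
Finally, the torsion subgroup is generated by the $2$-torsion point already identified: by \cite[Table 4.5]{MirandaPersson} together with rank $\geq 1$ and $\chi=2$ (for a $K3$ elliptic surface), the torsion is either $\Z/2\Z$ or $\Z/2\Z\times\Z/2\Z$, and inspection of the cubic in $x$ (whose discriminant is generically non-square) shows there is only one rational $2$-torsion point.

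The main obstacle is the fibre analysis for the upper bound: the discriminant of \eqref{eq:area-bisector} is a long polynomial in $n$ and $u$, and distinguishing Kodaira types $I_k$ from $I_k^*$, $III$, $IV$, etc.\ requires running Tate's algorithm at each singular $u$. In practice I would carry this out by computer algebra, guided by the corresponding computation in the hyperbolic setting \cite[Lemma 6.1]{LalinMila} to which our curve is related by the change of variables above; the sum $\sum_\nu (m_\nu-1)$ should come out to $14$, yielding the bound $\mathrm{rk}(E_n(\C(u)))\leq 4$.
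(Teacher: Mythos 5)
Your proposal is correct and follows essentially the same route as the paper, which states this lemma without a separate proof precisely because it is the analogue of \cite[Lemma 6.1]{LalinMila} under the substitution $u\mapsto -u$, $t\mapsto -t$ noted before \eqref{eq:area-bisector}: the direct verifications of the $2$-torsion point and of $Q(n,u)$, the specialization/Nagell--Lutz argument for infinite order, and the Shioda--Tate bound $\mathrm{rk}\leq 18-\sum_\nu(m_\nu-1)$ are exactly the ingredients of that computation, and the fibre configuration transfers unchanged since the change of variables is an isomorphism over $\C$. The only claims in your write-up going beyond the lemma (the determination of the full torsion group via \cite[Table 4.5]{MirandaPersson}, a table for rational elliptic surfaces rather than $K3$'s) are not needed and can be dropped.
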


\section{Isosceles triangle with meridians and equator as sides} \label{sec:isosceles}
\begin{figure}[h]
  \includegraphics{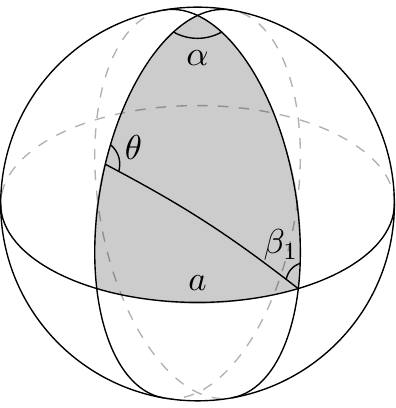}
    \caption{Schematic picture of the triangles under consideration.}
    \label{fig:my_label}
\end{figure}
      
In this section we consider a special family of spherical triangles. Namely we consider isosceles triangles with two half-meridians and a piece of the equator as sides. We will set that the side that is part of the equator has length $a$. The other two sides have length $\pi/2$. The angles are then $\alpha, \pi/2,\pi/2$. The median/bisector/height corresponding to $a$ is also $\pi/2$, and is, therefore, rational. 

Notice that the law of sines \eqref{eq:sines} gives $\sin(\alpha)=\sin(a)$ while the law of cosines \eqref{eq:cosines} gives $\cos(a)=\cos(\alpha)$.  Thus $a$ and $\alpha$ are rational simultaneously. Assume they are. 

Our goal is to study when the other two cevians are rational. Thus consider a cevian $d$ from $B$ to $b$, intersecting the side $b$ at angle $\theta$ on the side of the vertex $A$, and dividing the angle $\beta$ at $B$ into two angles  $\beta_1$ on the side of the vertex $A$ and $\pi/2-\beta_1$ on the side of $a$ (see Figure \ref{fig:my_label}).

\subsection{Median} If the other two cevians are medians of length $m$, then they divide the corresponding opposite side into two geodesics of length $\pi/4$. But the  law of cosines \eqref{eq:cosines} gives $\cos(m)=\cos(\pi/4)\cos(a)$. Since $\cos(\pi/4)$ is irrational, so is $m$, unless $\cos(a)=0$. But this is  only possible when $a=\alpha=\pi/2$, and this leads to  the equilateral triangle that appears as the sole solution of Theorem \ref{thm:equilateral}.

\subsection{Height} If the other two cevians are heights of length $h$, then $\theta=\pi/2$, and the triangle containing the sides $a$ and $h$ must be isosceles since it has two angles of $\pi/2$. Thus $h=a$ and any triangle with $a$ rational gives a solution. 

\subsection{Bisector} If the other two cevians are bisectors of length $\flat$, then $\beta_1=\pi/4$. By the law of sines \eqref{eq:sines}, $\frac{\sin(\theta)}{\sin(\pi/2)}=\frac{\sin(\alpha)}{\sin(\flat)}$. 
From this
\begin{equation}\label{eq:sinbisector}\sin(\theta)\sin(\flat)=\sin(\alpha).
\end{equation}
The supplemental law of cosines \eqref{eq:supplementalcosines} gives 
\begin{align*}\cos(\alpha)=&-\cos(\theta)\cos(\pi/4)+\sin(\theta)\sin(\pi/4)\cos(\flat),\\
\cos(\pi/2)=&-\cos(\pi-\theta)\cos(\pi/4)+\sin(\pi-\theta)\sin(\pi/4)\cos(\flat).
\end{align*}
Adding the above equations, \[ \sin(\theta)\cos(\flat)=\frac{\cos(\alpha)}{2\sin(\pi/4)}.\] 
By combining with equation \eqref{eq:sinbisector}
we obtain
\[\tan(\flat)=\tan(\alpha)2\sin(\pi/4).\]

Since $\tan(\alpha)$ is rational, and $\sin(\pi/4)$ is not, we must have $\tan(\alpha)=0,$ and therefore $\alpha=\pi/2$. This leads, once again, to  the equilateral triangle that appears as the sole solution of Theorem \ref{thm:equilateral}.

\subsection{Area bisector} If the other two cevians are area bisectors of length $v$, the areas of the half-triangles are $\alpha+\beta_1+\theta-\pi$ and $\pi-\beta_1-\theta$. Combining these two equations, \[\pi =\alpha/2+\theta+\beta_1.\]

By the supplemental law of cosines \eqref{eq:supplementalcosines},
\[\sin(\beta_1)\sin(\alpha)\cos(\pi/2)=\cos(\theta)+\cos(\beta_1) \cos(\alpha).\]
Writing $\cos(\theta)=-\cos(\alpha/2+\beta_1)$,
\[0=-\cos(\alpha/2)\cos(\beta_1)+\sin(\alpha/2)\sin(\beta_1)+\cos(\beta_1)\cos(\alpha).\]
This gives
\begin{equation}\label{eq:tanbeta}
\tan(\beta_1)=\frac{\cos(\alpha/2)-\cos(\alpha)}{\sin(\alpha/2)}.
\end{equation}
Since $\alpha+\theta+\beta_1-\pi$ is half the area of the triangle, it must be rational, and therefore, $\theta+\beta_1$ is rational, and since $\alpha/2+\theta+\beta_1=\pi$, we conclude that $\alpha/2$ is rational. 

By the law of sines \eqref{eq:sines} we have 
\[\frac{\sin(v)}{\sin(\alpha)}=\frac{\sin(\pi/2)}{\sin(\theta)}=\frac{1}{\sin(\theta)}.\]
Therefore, we need that $\sin(\theta)$ be rational.

By the supplemental law of cosines \eqref{eq:supplementalcosines}, 
\begin{align}
\cos(\alpha)=&-\cos(\theta)\cos(\beta_1)+\sin(\theta)\sin(\beta_1)\cos(v)\label{eq1}\\
\cos(\pi/2)=&-\cos(\pi-\theta)\cos(\pi/2-\beta_1)+\sin(\pi-\theta)\sin(\pi/2-\beta_1)\cos(v)\nonumber\\
=&\cos(\theta)\sin(\beta_1)+\sin(\theta)\cos(\beta_1)\cos(v).\label{eq2}
\end{align}
Multiplying \eqref{eq1} by $\sin(\beta_1)$, \eqref{eq2} by $\cos(\beta_1)$, and adding, we get 
\[\cos(\alpha) \sin(\beta_1)=\sin(\theta)\cos(v).\]
From this, we see that $\sin(\beta_1)$ must be rational. Since $\tan(\beta_1)$ must be rational by \eqref{eq:tanbeta}, then $\cos(\beta_1)$ is also rational. We have then 
\begin{equation}\label{eq:betaw}
\tan(\beta_1)^2+1=w^2
\end{equation}
Setting 
\[n=\frac{\sin(\alpha/2)}{1+\cos(\alpha/2)}\]
in \eqref{eq:tanbeta}, combining in \eqref{eq:betaw}, and substituting $w(n^2+1)\rightarrow w$,
we get
\[w^2=n^6 - 5n^4 + 11n^2 + 1.\]
We will need a lemma.
\begin{lem} \label{lem:gen2}
The only rational points on the genus 2 curve
\[C:Y^2=X^6-5X^4+11X^2+1\]
are $(0,\pm 1)$ and the two points at infinity. 
\end{lem}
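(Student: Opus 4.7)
Observe that the involution $(X,Y) \mapsto (-X, Y)$ preserves the defining equation of $C$, so $C$ is bielliptic. Setting $U = X^2$, $V = Y$ yields a degree-$2$ quotient map $\pi \colon C \to E$ defined over $\Q$, where
\[
E \colon V^2 = U^3 - 5U^2 + 11U + 1.
\]
An affine rational point $(X,Y) \in C(\Q)$ with $X \neq 0$ projects to a rational point of $E$ whose $U$-coordinate is a nonzero rational square; the two points $(0, \pm 1) \in C(\Q)$ project to $(0, \pm 1) \in E(\Q)$, and the two rational points at infinity of $C$ both map to the identity $O_E$. The lemma is therefore equivalent to showing that the only rational points of $E$ whose $U$-coordinate is a rational square, zero, or $\infty$ are $(0, \pm 1)$ and $O_E$.

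The plan is to apply the method of Flynn and Wetherell \cite{FlynnWetherell}. First, I would compute $E(\Q)$ via a $2$-descent to obtain the rank and a saturated set of generators modulo torsion; note that the irreducibility of the cubic $U^3 - 5U^2 + 11U + 1$ over $\Q$ already forces the rational $2$-torsion to be trivial. Second, depending on the outcome, I would apply either classical Chabauty--Coleman to $C$ (if the total Mordell--Weil rank of $\mathrm{Jac}(C)(\Q)$ is at most $1$) or elliptic Chabauty to the map $\pi$ (using the ``$U$ is a square'' condition to regain the Chabauty inequality, possibly after a quadratic base change). At a well-chosen prime $p$ of good reduction, this produces a $p$-adic analytic criterion which, combined with a Mordell--Weil sieve if needed, singles out exactly the three known points of $E(\Q)$ and hence the four known points of $C(\Q)$.

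The main obstacle is likely the rank computation: if the $2$-Selmer bound leaves room for a non-trivial $2$-part of the Shafarevich--Tate group of $E$, a higher descent or a Heegner-point argument may be needed to pin down the rank. Once this is resolved and saturated generators are in hand, the remaining steps are algorithmic and can be carried out using the \texttt{Chabauty} intrinsic of \texttt{Magma}, which implements precisely the Flynn--Wetherell framework.
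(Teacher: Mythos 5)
Your reduction and choice of framework match the paper's: the paper also uses the bielliptic structure, passes to the quotient $E^a\colon V^2=U^3-5U^2+11U+1$ (your $E$), and applies the Flynn--Wetherell method. But as written your argument has a genuine gap: essentially all of the content of the proof is deferred to ``the remaining steps are algorithmic.'' The lemma is precisely the execution of those steps, and they are not routine. Concretely, what is missing is: (i) the rank computations --- both elliptic quotients of $C$ (the second being $V^2=U^3+11U^2-5U+1$, via $(X,Y)\mapsto(1/X^2,Y/X^3)$) have rank $1$, so $\mathrm{Jac}(C)(\Q)$ has rank $2$, equal to the genus, and your first contingency (classical Chabauty--Coleman) is in fact unavailable; (ii) the descent step of Flynn--Wetherell, which from $E^a(\Q)/2E^a(\Q)=\{O,(3,4)\}$ produces two covering curves $E_1^a$, $E_2^a$ defined over the \emph{cubic} field $\Q(\omega)$, $\omega$ a root of $U^3-5U^2+11U+1$ --- note that the relevant base change is cubic, not quadratic as you suggest, exactly because the cubic is irreducible and the $2$-torsion is trivial; and (iii) the $p$-adic step: one shows $E_1^a(\Q(\omega))$ has rank $0$, that $E_2^a(\Q(\omega))$ has rank $1$ generated by $(0,0)$ and an explicit $P_0$, and then runs a formal-group computation modulo $5^5$ with Strassman's theorem applied to each of a finite list of coset representatives $S$ (so that $S+nQ_0$ with rational $x$-coordinate forces $n=0$). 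None of this is a push-button consequence of your setup, and the ``elliptic Chabauty'' you invoke only succeeds because $\mathrm{rk}\,E_2^a(\Q(\omega))=1<[\Q(\omega):\Q]=3$, a fact you would need to establish.

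In short: right strategy, but the proposal is a plan rather than a proof. To complete it you must carry out the $2$-descent on $E^a$, construct and analyze the two covers over $\Q(\omega)$, and perform the $5$-adic Strassman argument on each coset; citing a \texttt{Magma} intrinsic does not substitute for verifying that its hypotheses (in particular the rank bounds over the cubic field) actually hold here.
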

Note that the points $X=0$ correspond to $n=0$ and yield a degenerate case with $\alpha=0$.
Thus, assuming the lemma, we see that there are no such triangles with rational area bisectors.

\begin{proof}[Proof of Lemma~\ref{lem:gen2}] We follow the method due to Flynn an Wetherell \cite{FlynnWetherell}. Notice that $C$ is a bielliptic curve of genus 2.  $C$ covers two elliptic curves: 
\[E^a:Y^2=x^3-5x^2+11x+1,\] 
\[E^b:Y^2=x^3+11x^2-5x+1,\]
with the maps $(X,Y)\rightarrow (X^2,Y)$ and $(X,Y)\rightarrow (1/X^2,Y/X^3)$. Both $E^a(\Q)=\langle (3,4)\rangle$ and $E^b(\Q)=\langle (-1,4)\rangle$ have rank 1, and $E^a\times E^b$ is isogenous to the Jacobian $J$ of $C$. Since the Jacobian has rank  2,  the more standard methods for finding rational points, such as Chabauty's theorem, cannot be applied.

Our goal is to apply Lemma 1.1(a) from \cite{FlynnWetherell}, to the curve $E^a$. Let \[F^a(x)=x^3-5x^2+11x+1.\] $F^a(x)$ is an irreducible polynomial over $\Q$. Let $\omega$ a root of $F^a(x)$.

First we do the 2-descent and find that  $E^a(\Q)/2E^a(\Q)=\{O,(3,4)\}$. Then \cite[Lemma 1.1(a)]{FlynnWetherell} asserts that if $(X,Y) \in C(\Q)$, then $x=X^2$ satisfies one of the following two equations.
\begin{align*}
E_1^a=& y^2=x(x^2+(\omega-5)x+\omega^2-5\omega+11),\\
E_2^a=& y^2=(3-\omega)x(x^2+(\omega-5)x+\omega^2-5\omega+11).
\end{align*}

We remark that $E_1^a$ has rank 0 and torsion isomorphic to $\Z/4\Z$ generated by 
\[\left(\frac{\omega^2}{4} - \frac{3\omega}{2} + \frac{13}{4}, \frac{\omega^2}{4} - \frac{3\omega}{2} + \frac{17}{4}\right).\]
Thus, the only affine points from $C(\Q)$ arising from $E_1^a$ are $(0,\pm 1)$. 

We now consider $E_2^a(\Q(\omega))$. A standard descent argument shows that the rank is 1, with two generators: $(0,0)$ of order 2 and 
\[P_0=\left(1,-\frac{\omega^2}{2}+3\omega-\frac{9}{2}\right)\]
of infinite order. We need to check that there are no extra points with rational $x$-coordinate. For this, we apply the argument from Section 2 in \cite{FlynnWetherell}, and reduce modulo $5$. (Remark that the prime $5$ satisfies the technical conditions required by \cite[Eq. (2.13)]{FlynnWetherell}.) Let us denote by $\;\widetilde{}\;$ the reduction modulo $5$. We see that $\widetilde{P_0}$ has order $28$ in $\widetilde{E_2^a}(\F_5(\widetilde{\omega}))$. Therefore, any point $P$ of $E_2^a(\Q(\omega))$ can be written uniquely as $P=S+nQ_0$, for $n \in \Z$, $Q_0=28P_0$ and $S$ a point in the set 
\[\{kP_0, kP_0+(0,0): k\in \Z,  -14< k<14 \}.\]
In the above set the only points that have rational $x$-coordinate when reduced to $\widetilde{E_2^a}(\F_5(\widetilde{\omega}))$  are those in
\[M:=\{O, (0,0), \pm P_0, \pm 10P_0, \pm 4P_0+(0,0), \pm 13P_0+(0,0)\}.\]
Of those, $O, (0,0)$, and $\pm P_0$ have actual rational $x$-coordinate when viewed in $E_2^a(\Q(\omega))$.

Next we will check that if a point of the form $S+nQ_0$ with $S\in M$ has rational $x$-coordinate, then necessarily $n=0$. 

We work modulo $5^5$ as in \cite[Example 3.1]{FlynnWetherell}. Eventually we want to compute the $x$ coordinate of $nQ_0$ for $n$ an arbitrary integer. To do this efficiently, it is convenient to work on the formal group of the elliptic curve. Thus, we
compute the  $z$-coordinate of $Q_0$, where  $z=-x/y$:
\[5(343 \omega^2 + 534 \omega +379) \pmod{5^5}.\]

In order to multiply by $n$, we will  combine the logarithm and the exponential. Therefore our next step is to find the $\log$ of the $z$-coordinate of $Q_0$ (\cite[Eq. (2.9)]{FlynnWetherell}):
\[5(18\omega^2 +534\omega + 429) \pmod{5^5}.\]
Now we substitute $n\log(z)$ into the exponential and find the $z$-coordinate of $nQ_0$ (\cite[Eq. (2.10)]{FlynnWetherell}):
\begin{equation} \label{eq:nQ}
5(18\omega^2+534\omega+429)n+ 5^3(18 \omega^2+5\omega+18)n^3+5^4(4\omega^2+4\omega+1)n^5 \pmod{5^5}.
\end{equation}
Finally we compute $1/x$ (\cite[Eq. (2.6)]{FlynnWetherell}):
\[5^2\cdot 49 n^2\omega^2+ 5^2\cdot 61 n^2 \omega +5^2(75n^4+97n^2) \pmod{5^5}.\]
In order to have a rational point of the form $nQ_0$, the coefficients of $\omega^2$ and $\omega$ must be $0$ in $\Z_5$. 
Thus, we must have $5^2\cdot 49 n^2=0$ in $\Z_5$. This has a double root at $n=0$, and  Strassman's Theorem implies that the total number of roots can not exceed 2. Hence, we conclude that $n=0$ is the only possible solution. 

One must then do the same procedure for $S+nQ_0$ for each of the elements $S\in M$. 

To work with  $(0,0)+nQ_0$, we replace the coordinates of  $(0,0)$ and the value of equation \eqref{eq:nQ} in \cite[Eq. (2.8)]{FlynnWetherell}. This gives  
\[5^2(97\omega^2+91+6\omega)n^2+ 5^4\cdot (3\omega^2+3)n^4\pmod{5^5}.\]
for the $z$-coordinate of $(0,0)+nQ_0$.
We compute $1/x$ to get
\[5^4\cdot 4 n^4\omega^2 + 5^4 n^4\omega + 5^4 \cdot 2n^4 \pmod{5^5}\]
and conclude that $n=0$ as before. 

For $P_0+nQ_0$, we obtain 
\begin{align*}
&1+5(231\omega^2+337 \omega+405)n
+5^2(116\omega^2+30\omega+104)n^2
+5^3(14\omega^2+21\omega+22)n^3
\\
&+5^4(4\omega^2+3\omega+3)n^4+5^4(\omega^2+4\omega+1)n^5 \pmod{5^5}
\end{align*}
for the $z$-coordinate, and
\begin{align*}&(5^4\cdot 3n^5 + 5^4\cdot 2n^4 + 5^3 \cdot 13n^3 + 5^2\cdot 71n^2 + 5\cdot 221n + 971)\omega^2 \\&+ (5^4 n^4 + 5^3\cdot 7n^3 + 5^2\cdot 124n^2 + 5\cdot 174n + 2028)\omega \\& + (5^4 n^5 + 5^4\cdot 4 n^4 + 5^3\cdot 8n^3 + 5^4n^2 + 5\cdot 197n + 2358)\pmod{5^5}
\end{align*}
for $1/x$.
Since $5\nmid 971$, the coefficient of $\omega^2$ cannot be $0$ in $\Z_5$.

For $10P_0+nQ_0$, we obtain 
\begin{align*}
&(2780\omega^2+ 1980\omega+ 1584)+5(546\omega^2+157 \omega+476 )n+5^2 (112\omega^2+88\omega +100)n^2 \\&+5^3(5\omega^2 +17\omega +8)n^3+5^4(\omega^2+2\omega)n^4 + 5^4(4\omega^2+2\omega+4)n^5 \pmod{5^5}
\end{align*}
for the $z$-coordinate, and 
\begin{align*}&(5^4 n^5 + 5^4\cdot 4 n^4 +5^3\cdot 13n^3 + 5^2 \cdot 42 n^2 + 5\cdot 551 n +  2971)\omega^2\\& + (5^4 \cdot 3 n^5+5^4 n^4 + 5^4\cdot 3n^3 + 5^4\cdot 3n^2 + 5\cdot 489n + 573)\omega\\&+ (5^4\cdot 4 n^5 + 5^4\cdot n^4 + 5^4\cdot 3 n^3 + 5^2\cdot 72n^2 + 5\cdot 503n + 2058)\pmod{5^5}\end{align*}
for $1/x$.
Since $5 \nmid 2971$, the coefficient of $\omega^2$ cannot be $0$ in $\Z_5$.

For $4P_0+(0,0)+nQ_0$, we have 
\begin{align*}
&(2740\omega^2+1325\omega+2769)+5(389\omega^2+558 \omega+499)n+5^2(12\omega^2+98\omega+40)n^2\\&+
5^3(20\omega^2+3\omega+17)n^3+5^4(\omega^2+2\omega)n^4+5^4(\omega^2+3\omega+1)n^5 \pmod{5^5}\end{align*}
for the $z$-coordinate, and 
\begin{align*}
&(5^4 \cdot 4 n^5 + 5^4 \cdot 4 n^4 +5^3\cdot 22 n^3+ 5^2\cdot 107n^2 + 5\cdot 259 n +2356)\omega^2 \\&+ (5^4\cdot 2 n^5 +5^4 n^4 + 5^4\cdot 4n^3 + 5\cdot 136 n + 2788)\omega\\& + (5^4 n^5 + 5^4 n^4 + 5^2\cdot 47 n^2 + 5\cdot 607n + 313)\pmod{5^5}
\end{align*}
for $1/x$.
Since $5\nmid 2356$, the coefficient of $\omega^2$ cannot be $0$ in $\Z_5$.

For $13P_0+(0,0)+nQ_0$, we have 
\begin{align*}
&(2585\omega^2+1595\omega+1951)+5(149\omega^2+388\omega+390)n+5^2(111\omega^2+110\omega+39)n^2\\&+5^3(21\omega^2+9\omega+13)n^3+5^4(4\omega^2+3\omega+3)n^4 +5^4(4\omega^2+\omega+4)n^5 \pmod{5^5}  \end{align*}
for the $z$-coordinate, and
\begin{align*}
&(5^4\cdot 2 n^5 + 5^4\cdot 2 n^4 + 5^3\cdot 12 n^3+ 5^2\cdot 36 n^2 + 5\cdot 359 n + 2456) \omega^2\\& + (5^4 n^4 + 5^4\cdot 3 n^3 + 5^2\cdot 84 n^2 + 5\cdot 596 n + 3118)\omega \\&+ (5^4\cdot 4 n^5 + 5^4\cdot 4 n^4 + 5^3\cdot 17 n^3 + 5^3\cdot 18 n^2 + 5\cdot 403n + 803)  \pmod{5^5},
\end{align*}
for $1/x$.
Since $5\nmid 2456$, the coefficient of $\omega^2$ cannot be $0$ in $\Z_5$.

Finally, remark that we do not have to consider the points of the form $-P_0+nQ_0$, $-10P_0+nQ_0$,  $-4P_0+(0,0)+nQ_0$, and $-13P_0+(0,0)+nQ_0$, separately since these points can be obtained by multiplying the previous cases by $-1$.

Thus, we conclude that $n=0$. We  examine the rational $x$-coordinates of the $S \in M$, and conclude that the only possibilities for points having rational $x$-coordinates are $0,1$ coming from $(0,0)$ and $\pm P_0$. It is immediate to see that $X=1$ does not lead to points in $C(\Q)$, and therefore the only possibly solution is $X=0$, leading to a degenerate triangle as discussed before. 
\end{proof}

\section{Further research}\label{sec:further}
There are many topics of further research based on this current work. First, one could try considering different versions of ``rationality'' for triangles. 
One natural way would be to relax the condition that all trigonometric functions of the sides and angles/area be rational, and to call a length/angle rational if, say, its tangent is rational (compare \cite{Goins} and \cite{LalinMila}).
Another way would be to call a spherical length rational if the length of the straight segment (inside the sphere) joining its two endpoints is rational. If $a$ denotes the spherical length, it is not hard to see that this corresponds to $\sin(a/2)$ being rational.

Yet another definition of rationality would be that length / angles be rational multiples of $\pi$.
It is easy to see that the isosceles triangle with apex on the north pole and bottom side on the equator of length $\frac p q \pi$ (see Figure~\ref{fig:my_label}) has all its sides, angles and area rational in this sense.
It would be interesting to know if there exist triangles having this property that do not come from this construction.

It is also interesting to consider the necessary conditions that prevent a spherical triangle from having multiple rational cevians (heights, medians, area bisectors, etc). This point of view is, to some extent, opposite to the investigation in Section \ref{sec:isosceles}. For example, one can prove that if a triangle is isosceles with the angle between the identical sides equal to $\frac{\pi}{2}$, then the medians cannot be rational simultaneously. It is natural then to wonder which of these assumptions can be lifted. 

Another possible direction for further research is the construction of high rank elliptic curves as in \cite{Dujella-Peral,IzadiNabardi}. More specifically, the authors of  \cite{IzadiNabardi} used Heron's formula to derive elliptic curves with high ranks. As there is an analog of Heron's formula in the spherical world, namely L'Huilier's formula, it would be interesting to try to construct elliptic curves with high ranks by adapting the method of \cite{IzadiNabardi}.

\bibliographystyle{amsalpha}
\bibliography{MatildeBibliography} 
 
 \end{document}